\documentclass{article}
\usepackage{amssymb}
\usepackage{amsmath, amssymb, amsthm}
\usepackage{graphicx}
\usepackage{comment}
\usepackage{subcaption}
\usepackage[utf8]{inputenc}
\usepackage{multirow}
\newcommand{\required}[1]{\section*{\hfil \sharsokol2013advanced\hfil}}

\usepackage{verbatim,url}
\usepackage{graphicx, epsfig}
\graphicspath{{LRA_figs/}}
\usepackage{bbm}
\usepackage{amssymb,amsmath,amsfonts,amsthm}
\usepackage{hyperref}
\usepackage{upref}

\newcommand{\Beq}{\begin{equation}}
\newcommand{\Eeq}{\end{equation}}
\newcommand{\be}{\begin{equation}}
\newcommand{\ee}{\end{equation}}
\newcommand{\bs}{\begin{split}}
\newcommand{\beq}{\begin{equation*}}
\newcommand{\eeq}{\end{equation*}}
\newcommand{\bal}{\begin{align}}
\newcommand{\eal}{\end{align}}

\newtheorem{theorem}{Theorem}
\newtheorem{lemma}[theorem]{Lemma}
\newtheorem{assum}[theorem]{Assumption}
\numberwithin{theorem}{section}

\usepackage{xcolor}

\newtheorem{remark}[theorem]{Remark}

\newcommand{\Nc}{\mathcal{N}}

\numberwithin{equation}{section}

\newcommand{\bel}[1]{\begin{equation}\label{#1}}

\newcommand{\eel}[1]{{\label{#1}\end{equation}}}

\usepackage{authblk} 

\title{Uncertainty Quantification for Landweber Iteration with Randomized Normal-Operator Approximation}
\author[$\dagger$]{Anuj Abhishek} 
\author[$*$]{Sean Holman}
\affil[$\dagger$]{\small Department of Mathematics, Applied Mathematics and Statistics, Case Western Reserve University, USA\par axa1828@case.edu}
\affil[$*$]{\small Department of Mathematics, University of Manchester, UK\par sean.holman@manchester.ac.uk}
\date{}
\begin{document}

\maketitle
\begin{abstract}
   \noindent  Iterative methods are extremely popular for solving linear ill-posed inverse problems. While deterministic convergence, or more precisely, semi-convergence of such methods is widely studied, the problem of quantifying uncertainty in the resulting reconstructions due to random noise in the observed data has received far less attention. In this work, we study uncertainty quantification for the Landweber iteration in linear inverse problems using the Radon transform as a motivating example. We interpret the Landweber reconstruction statistically by analyzing how uncertainty in the measured data is propagated through the reconstruction map. This perspective is closely related to generalized fiducial inference, where uncertainty about the parameter is induced by inverting the relation between the observed data and the unknown (fixed) quantity. We combine the resulting stochastic uncertainty with a theoretical bound on the regularization bias to construct confidence intervals for the reconstructed solution. The evaluation of the resulting uncertainty estimates require repeated operations with the forward operator that can become expensive in large-scale problems. To reduce this cost, we use randomized singular value decomposition to obtain a low-rank approximation of the normal operator associated with the Radon transform and incorporate this approximation in quantifying uncertainty. {{In particular, this requires that the additional randomization error introduced due to the use of such randomized techniques be included in our approach for uncertainty quantification.}} Numerical results show that the proposed method provides accurate reconstructions and reliable uncertainty estimates at substantially reduced computational cost.
\end{abstract}
\section{Introduction}
In a statistical inverse problem, one is interested in the estimation of a parameter $x^\dagger\in X\subset \mathbb{R}^n$ from noisy observations of the form, \begin{align}\label{eq:1.1}
  Y=Ax^\dagger+\epsilon
\end{align}
where $Y\in \mathbb{R}^m.$
 In this article, we assume that $A$ is a linear transform and $\epsilon$ denotes (Gaussian) random noise in the data whose distribution is known or can be estimated reliably from the observed data. In particular, we are interested in the problem of computerized tomography (CT) where the linear transform $A$ corresponds to a discrete implementation of the Radon transform acting on a function whose vectorized representation is given as $x^\dagger$. Such inverse problems are typically ill-posed making the solution very sensitive to noise in the data.  In this work, we  develop an intrinsic characterization of uncertainty in our estimation of the parameter of interest, $x^\dagger\in X$ where the estimation is done using a standard iterative method, such as the Landweber's method. Our proposed method differs from the standard frequentist paradigm, where new statistical estimators, such as kernel density estimators have been devised for such problems and various aspects of such estimators, such as minimax optimality and asymptotic confidence regions are studied, see e.g. \cite{ Arya2023Adaptive,bissantz_14,Cavalier_98,Cavalier_00,Tsybakov_1991}. Here we would like to mention that the form of such estimators is inspired by the well-known filtered back-projection (FBP) type formula. Our proposed method does not fall in the Bayesian paradigm either, wherein the solution of an inverse problem is given as the posterior distribution that is absolutely continuous with respect to the prior distribution chosen by the practitioner for the parameter to be inferred. In this case, it is the posterior distribution that is used to quantify uncertainty in the prediction, see e.g.,\cite{calvetti_somersalo_review,kaipio2006statistical,Knapik2011Bayesian,Monard_19,Monard2021Consistent,van2007bayesian}.  In contrast, our proposed methodology is closely related to the inference regime that is known among practitioners as Generalized Fiducial Inference (GFI), \cite{Hannig2009, Hannig2016,Iyer2004}.
 
To briefly explain the idea of GFI in an abstract setting, suppose one is given a `data generating equation' of the form:
 \begin{align*}
     Y=\mathcal{G}(\epsilon,x^\dagger)
 \end{align*} where $x^\dagger$ is the fixed unknown parameter and $\epsilon$ is an auxiliary random variable whose distribution function is known (or, can be estimated). For instance, the model given by \eqref{eq:1.1} above can be rewritten in the form of such a data generating equation by rewriting, $\mathcal{G}(\epsilon,x):=Ax^\dagger+\epsilon$. It is clear that any randomness in the observed data $Y$ results from the corresponding randomness in the auxiliary random variable $\epsilon$ because $x^\dagger$ is the fixed (true) parameter. For a fixed realization of the observed data $Y=y$, one then seeks an inverse map $Q_y$ which maps realizations of the random variable $\epsilon$ into the parameter space. The distribution thus induced on the parameter space is referred to as the generalized fiducial distribution, GFD, which can be used for inference. Thus, the central idea in GFI is to transfer the randomness from the data to the parameter space using an inverse of a data-generating equation without the use of Bayes’ theorem. In particular, unlike Bayesian inference, randomness in the parameter space is not introduced by any (subjective) choice of the prior distribution but rather the randomness present in the data generating equation through the auxiliary variable $\epsilon$ is used to introduce a distribution on the parameter space. 
 \par For ill-posed inverse problems, however, the exact inverse associated with the data-generating equation is generally unstable and is therefore unsuitable for uncertainty quantification. Our main idea is to replace this exact inversion by a regularized iterative inversion. In particular, for the Landweber method, early stopping introduces a form of regularization. Thus the $K-$th iterate in such a method can be thought of as a regularized inversion map from the data space to the parameter space, and we use this map to pushforward the distribution of the observational noise, see our discussion in section \ref{sec:fid_inf} below. Note that, this idea produces for every iteration $k$ a data-dependent distribution over possible reconstructions, thereby placing the classical Landweber method in a probabilistic iterative framework (PIM), see e.g \cite{Ipsen_21}, where the calibration of such methods is also studied.
 
 Our goal in this article is different, where we seek to use a distributional output at the (stopping) $K-$th iteration to capture the uncertainty in the reconstruction. The stopping index, $K$, consequently plays a dual role: it determines the regularization of the reconstruction and simultaneously controls the propagated uncertainty. As we show in section \ref{sec:err_dist}, this allows us to study explicitly the distribution of the reconstruction error and its decomposition into a bias and a variance term. In such an analysis, the role of the stopping index $K$ becomes clearer. Early stopping has the effect of controlling the variance term and thus allowing noise amplification to be controlled. However, early stopping also introduces a bias in the reconstruction which needs to be carefully estimated. We show in this work how to combine the generalized fiducial distribution generated by the probabilistic iteration with bounds on this bias and an appropriate stopping criterion in order to construct confidence regions for the unknown parameter, $x^\dagger$.  On the other hand, in a series of recent articles \cite{Katsevich2025,Katsevich2026}, reconstruction from noisy discrete generalized Radon data at native resolution has been analyzed and it was shown that the local reconstruction error converges to a Gaussian random field with an explicitly characterized covariance structure which builds on earlier articles, \cite{Ab_Ka_We_1,Ab_Ka_We_2}. Our approach in this work is complementary in that we study uncertainty generated by an iterative regularization method, where the propagated observational noise is combined with an explicit bound on the regularization bias introduced by early stopping, so as to construct confidence regions.
 \par {{Finally, we consider the applicability of the analysis developed in sections \ref{sec:fid_inf} -\ref{sec:3.5} for large scale ill-posed inverse problems. Herein, the fact that calculation of the propagated covariance matrix required for uncertainty quantification requires us to iterate over a polynomial in the normal operator, $A^TA$, makes the computation  excessively expensive and susceptible to noise in the data, see our discussion at the beginning of  section \ref{sec:3.6}. Through the use of randomized SVD (rSVD) approximation of the normal operator, we address both of these problems. First off, working with a truncated rank $r-$ approximation of the normal operator reduces the computational overhead. Secondly, it introduces an additional level of regularization that makes the computation less susceptible to the level of noise in the data. A similar philosophy of introducing additional regularization through the use of randomized SVD techniques also motivated the authors of \cite{ito2020} where rSVD was combined with classical regularization methods such as truncated SVD (TSVD) and Tikhonov regularization. More recently, randomized techniques have been combined with Krylov subspace methods for computing regularized solutions to large-scale inverse problems, see \cite{ChungGazzola2025}. In the context of the present work, introducing this additional randomization in the method calls for carefully revisiting the bias and variance estimates, which we do in subsections \ref{sec:3.7} and \ref{sec:3.6.2} where we first devise a new stopping rule based on bias and variance approximations arising from our use of randomized SVD and subsequently construct confidence regions for which we also provide lower bounds on frequentist coverage. }}
 \par The organization of the article is as follows: in section \ref{sec:2}  we describe the mathematical set up for the problem, and in section \ref{sec:3} describe in detail our proposed method. In particular, subsection \ref{sec:3.6.2} illustrates the method of constructing confidence regions used in the present work and provides theoretical justification for the proposed method. In section \ref{sec:4}, we describe the numerical simulations that were run to check the validity of the proposed approach. Section \ref{sec:app} contains some auxiliary results and discussion needed for our core analysis.

\section {Observation model}\label{sec:2}
Let $f: \mathbb R^2 \to \mathbb R$ denote the unknown image (function).  We observe noisy samples of its Radon transform on a fixed design grid of angles and detector locations. More precisely, let $\{\vartheta_i\}_{i=1}^{n_\vartheta}\subset [0,\pi)$ be projection angles and $\{u_j\}_{j=1}^{n_u}\subset \mathbb R$ be the detector locations. The Radon transform of $f$ is then defined by the line integral:
\[
(Rf)(\vartheta_i,u_j)=\int_{\{x\in\mathbb{R}^2:\,x\cdot\theta_i=u_j\}}f(x)\,d\ell(x),
\quad
\theta_i=(\cos\vartheta_i,\sin\vartheta_i).
\]
In this work we assume that the data is corrupted by $\textrm{i.i.d.}$\,(independently and identically distributed) Gaussian noise. Thus, the measured sinogram  data is modeled by:
$$ Y_{ij} = (Rf)(\vartheta_i,u_j) + \varepsilon_{ij},
\qquad i=1,\dots,n_\vartheta,\quad j=1,\dots,n_u, $$
where the noise variables $\varepsilon_{ij}$ are $\textrm{i.i.d}$\,Gaussian with
$ \mathbb E[\varepsilon_{ij}] = 0,$ and $\mathbb E[\varepsilon_{ij}^2] = \sigma^2.$ 

In this article, we will present our results in a discrete setting. To that end, we  discretize $f$ in a finite-dimensional basis. For example, we represent $f$ by pixel values on a Cartesian grid and flatten it into a column vector $x^\dagger\in\mathbb R^n$. Using a standard discretization scheme, the discretized Radon transform at $(\vartheta_i,u_j)$ can be written as a weighted sum of pixel values,
$$(Rf)(\vartheta_i,u_j) \approx \sum_{k=1}^n a_{(i,j),k} x^\dagger_k,
$$
where the coefficients $a_{(i,j),k}$ encode the intersection length of the ray parametrized by $(\vartheta_i,u_j)$ with pixel $k$. Next we flatten the observed samples into a single vector
$$ y = \mathrm{vec}(Y_{ij}) \in \mathbb R^m,
\qquad m=n_\vartheta n_u,$$ in such a way, so as to write $$ y = A x^\dagger + \varepsilon,
\qquad \varepsilon \sim N(0,\sigma^2 I_m). $$ Here the matrix $A\in\mathbb R^{m\times n}$ is such that its $(i,j)$-row equals $(a_{(i,j),1},\dots,a_{(i,j),n})$ where the index $(i,j)$ corresponds to some index $p\in 1,\dots,m$ in the flattened vector. We will use this formalism in the later sections when we describe the probabilistic iterative method. However, for the numerical simulations used in this work, we do not explicitly construct such matrices $A$ and rather work with MATLAB's in-built `radon' and `iradon' implementations to do the computations in a matrix-free manner. Throughout this work, we will assume that the variance $\sigma^2$ is given. In practice, it can be estimated from the observed data using a variance based estimator, see \cite{bissantz_14, bissantz_05}.

\section{Proposed Method}\label{sec:3}
\subsection{Landweber iteration in Discrete Setting}\label{sec:3.1}
In this section, we begin with a brief survey of results on Landweber method \cite{Landweber1951}, which is a classical iterative technique used to solve inverse problems.
Given the observation model \eqref{eq:1.1}, in the classical Landweber method we generate a sequence of iterates starting from an initial guess $x_0$ according to the equation
\begin{equation}\label{eq:3.1}
    x^{(k+1)} = x^{(k)} - \omega A^T (A x^{(k)} - y)
\end{equation}
where $0<\omega<2/\lVert A^TA\rVert$ is a relaxation parameter and the norm used is the spectral norm. 
Note that by simply rearranging the terms we can write \eqref{eq:3.1} in the form
\begin{equation}\label{eq:3.2}
     x^{(k+1)} = G(x^{(k)})+f
\end{equation}
where
$G =  (I - \omega A^T A) \quad \text{and} \quad f=\omega A^Ty.$ Note that the choice of $\omega$ ensures that the map $G$ is a contraction and the spectral norm $\rho(G)<1$ whenever the matrix $A$ is full column rank.
The Landweber iteration has been extensively studied as an iterative regularization method for ill-posed inverse problems. For such problems, with a proper choice of the relaxation parameter as above, the iterates converge to the generalized solution for the exact data. However, for noisy data, the Landweber iteration exhibits a phenomenon called `semi-convergence' which requires the iteration to be stopped appropriately using a suitable stopping rule. In this sense the stopping iteration index can be thought of as a regularization parameter, \cite[section 6.1]{Engl_book}. In \cite{hanke_91}, Landweber-type methods for linear inverse problems were analyzed and it was shown that with appropriately chosen stopping rules, they serve as regularization methods and achieve optimal accuracy under suitable smoothness assumptions. Going beyond linear problems,   Landweber iteration was applied to non-linear problems in \cite{hanke_95} and a discrepancy principle based stopping rule was analyzed. Furthermore, the Landweber iterations were successfully applied to the study of inverse scattering problem in \cite{HankeHettlichScherzer1995} and a modified Landweber method was used for parameter estimation problem in \cite{Scherzer1998ModifiedLandweber}. They still continue to be an active object of study where they have been more recently applied to problems arising in cosmological X-Ray tomography, \cite{ChungOniskWang2025}, while at the same time modified versions of the original Landweber method continue to be developed, \cite{Aspri2020,AspriScherzer2023}. Landweber reconstruction has also been studied specifically for X-ray type transforms. In \cite{Holman2018},  the attenuated geodesic X-ray transform was analyzed and it was shown how microlocal instability of the forward operator leads to characteristic artifacts in Landweber reconstructions.

For the purpose of this present work, however, we will use the classical Landweber iteration \eqref{eq:3.1} to develop our ideas on `generalized' fiducial inference for iterative methods. Based on the discussion above that the K-th iterate can be thought of as a regularized inverse map for the given problem we 
rewrite the Landweber iteration in the following manner. First define,
$T_K = \omega\sum_{s=0}^{K-1} G^s A^T.$ Then starting with $x_0=0$, it is easy to see that, \begin{equation} \label{eq:LW_est}
  x^K(y) = T_K y .
\end{equation}
 %The stopping rule we use here is from \cite[eq. 6]{BC19} which is given by the so-called risk-minimization based discrepancy principle. We will denote this stopping rule by $K(\sigma,y_{\sigma})$ using the same notation as in \cite{BC19}. We also note that $K(\sigma,y_{\sigma})$ is always finite, see \cite[Theorem 4]{BC19}. We will denote the `Landweber solution' got by stopping the iteration at the optimal value $K(\sigma, y_{\sigma})$ by
%$x_{\sigma}^{K(\sigma, y_{\sigma})}$. Then, from \cite[Theorem 5]{BC19} we have:
%\begin{equation}\label{eq:convergence1}
%    \lim_{\sigma \to 0} \mathbb{E} \| x_{\sigma}^{K(\sigma, y_{\sigma})} - x^{\dagger} \|^2 = 0.
%\end{equation}
%\ac{I wonder if we might use some other more easily implementable stopping rule and still have something akin to \eqref{eq:convergence1} hold. I will try to use the standard discrepancy principle.}
\subsection{Fiducial inference for the inverse problem }\label{sec:fid_inf}
We now formulate the fiducial construction associated with the observation model \eqref{eq:1.1} and then describe its regularized counterpart for the Landweber method in the next subsection.
We recall the data generating equation,
\begin{equation} \label{eq:data_gen}
    Y = A x^{\dagger} + \epsilon, \quad \text{where} \quad \epsilon \sim \mathcal{N}(0, \sigma^2 I).
\end{equation}
Following \cite{Hannig_16}, we recognize this equation to be of the type
\[
Y =\mathcal{G}(U,\theta)=A\theta+\sigma U,
\qquad U\sim\mathcal N(0,I),\quad \theta:=x^\dagger\in\mathbb R^{n}.
\] Here, $U \sim N(0,I_m)$ is an auxiliary random variable with known distribution and $x^\dagger\in\mathbb R^{n}$ is the fixed unknown parameter. Following the presentation in \cite[Example 1]{Hannig_16}, we will first evaluate the exact Fiducial distribution on the parameter space to motivate our idea of constructing a generalized confidence distribution in the subsequent sections. In fiducial inference, for a fixed realization, $Y=y$ one seeks to invert the data generating equation with respect to the parameter while retaining the randomness of the auxiliary variable, $U$. In the present linear setting, assuming that the matrix $A$ has full column rank, the corresponding inversion map   for $x^{\dagger}$ in \eqref{eq:data_gen} gives the Fiducial inversion map:
$$Q_y(U) = (A^T A)^{-1}A^T (y - \sigma U)$$ and therefore the Fiducial distribution is the distribution of the random variable $x_{\sigma}|y:=Q_y(U)$ and as such:
\begin{align}
 x_{\sigma}|y\sim \Nc((A^TA)^{-1}A^Ty,\sigma^2 (A^TA)^{-1}).
\end{align}
For a well-conditioned problem, this provides a direct fiducial characterization of uncertainty in the estimation of $x^\dagger.$ In the ill-posed inverse problems considered here, however, the exact inversion map is unstable, since the mean as well as covariance involves computing $(A^TA)^{-1}$ for an ill-conditioned matrix, $A$. This motivates the principal idea proposed in this work. Rather than using the unstable exact fiducial inversion map, we replace it by the regularized inverse generated by a finite number of Landweber iterations. 

 Recall that, starting from $x_0=0$ the $K$-th Landweber iterate can be written as
\[
x^K= T_K(y):=\omega\,\sum_{s=0}^{K-1}G^s A^T y ,
\qquad 
G:=I-\omega A^T A.
\]
Thus, we have the following regularized {fiducial inversion} map
\[
Q^{K}_y(u):= T_K\big(y-\sigma u\big)
=\omega\,\sum_{s=0}^{K-1}G^s A^T\big(y-\sigma u\big),
\qquad u\sim\mathcal N(0,I_m).
\]
% \seanc{Maybe change notation from $Q^{K}_y$ to $Q^K_y$ or $Q^{reg}_{y,K}$.}
% \ac{Changed to $Q^K_y$}
Note that this can be interpreted as pushing forward the known law of $U$ through $Q^{K}_y$ to produce a data-dependent distribution on $x$ at iteration $K$. In this sense the standard Landweber iterative method can be thought of as belonging to the class of probabilistic iterative methods of \cite{Ipsen_21}; however we note that unlike in \cite{Ipsen_21}, the distributional output at step $K$ does not depend upon any initial `belief distribution'. A simple calculation shows that the induced (pushforwarded) fiducial law for $x^{\text{reg}}|y$ is the Gaussian distribution:
\begin{align}\label{eq:3.2.1}
x^{\text{reg}}| y \sim \mathcal N\big(x^K,\Gamma_K\big),
\qquad
x^K = \omega\,\sum_{s=0}^{K-1}G^sA^T y,\qquad
\Gamma_K=\sigma^2\,\omega^2\,\left(\sum_{s=0}^{K-1}G^s\right)A^T A\,\left(\sum_{s=0}^{K-1}G^s\right)^T.
\end{align}
% \seanc{I changed $\Sigma^K_\sigma$ to $\Gamma_K$ in \eqref{eq:3.2.1} to be consistent with the later notation. Does $\Sigma_K$ appear anywhere without $\sigma$? Generally, it might be useful to use $\Gamma_K$ throughout if possible.}
% \ac{Good point! I'll change this throughout}
The above construction should be viewed as a regularized analogue of the exact fiducial inversion. The exact fiducial map solves the data generating equation directly, whereas the finite-step Landweber map replaces the unstable inverse by an iterative regularization operator. Thus, for each \(K\), the auxiliary randomness is propagated through the same regularized inverse used to compute the reconstruction.
The mean of the resulting distribution is precisely the classical \(K\)-th Landweber iterate. The covariance \(\Gamma_K\) quantifies the propagation of the observational noise through the finite-step inverse. The iteration index \(K\) therefore controls both the regularization of the reconstruction and the amount of uncertainty transferred from the data space to the parameter space.

Unlike the exact fiducial distribution, however, the regularized distribution is affected by the bias introduced through early stopping. Consequently,using \eqref{eq:3.2.1} for uncertainty quantification requires us to estimate the bias. This is carried out in the following subsection by evaluating the distribution of the reconstruction error.

\subsection{Distribution of the error in the reconstruction}\label{sec:err_dist}

We now study the error associated with the mean of the regularized fiducial distribution obtained in the previous subsection. Recall that the mean of the distribution in \eqref{eq:3.2.1} is the usual \(K\)-th Landweber iterate $x^K = T_K y.$ If we use the mean of the above distribution as the estimated parameter, then the error in the estimation is distributed according to the following theorem.
% \seanc{I've reworked the numbering so that theorems, lemmas, remarks and propositions are all numbered together.}
% \seanc{I think the expectation in \eqref{errordist} should be changed to $G^K x^\dagger$. It is simpler and that matches with the proof and how it's used later.}
% \ac{changed it.}
\begin{theorem}\label{thm:3.1}
   The distribution of the error $x^{\dagger} - x^K$ is given by

\begin{equation}\label{errordist}
    x^{\dagger} - x^K \sim \mathcal{N}\left( {G^K x^{\dagger}},  \Gamma_K \right),
\end{equation}
where
\begin{equation}\label{eq:Cov_k}
    \Gamma_K =  \sigma^2\omega^2 \left (\sum_{k=0}^{K-1} G^k\right) A^T A \left (\sum_{j=0}^{K-1} G^j\right).
\end{equation} 
\end{theorem}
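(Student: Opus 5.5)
The plan is a direct computation from the representation $x^K = T_K y$ with $T_K = \omega\sum_{s=0}^{K-1}G^sA^T$ from \eqref{eq:LW_est}, together with the observation model $y = Ax^\dagger + \varepsilon$, $\varepsilon\sim\mathcal N(0,\sigma^2 I_m)$. Substituting the model gives
\[
x^\dagger - x^K = \Big(I - \omega\sum_{s=0}^{K-1}G^sA^TA\Big)x^\dagger \;-\; \omega\sum_{s=0}^{K-1}G^sA^T\varepsilon .
\]
This is an affine image of the Gaussian vector $\varepsilon$, so the error is Gaussian. It only remains to identify its mean and covariance.

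For the mean, I would use the telescoping identity. Since $\omega A^TA = I - G$ and $G$ commutes with $A^TA$ (being a polynomial in it), we get
\[
\omega\sum_{s=0}^{K-1}G^sA^TA = \sum_{s=0}^{K-1}G^s(I-G) = I - G^K .
\]
Hence the deterministic part is exactly $G^Kx^\dagger$. Because $\mathbb E[\varepsilon]=0$, the expectation of the error is $G^K x^\dagger$, which is the mean in \eqref{errordist}. This telescoping step is the only nontrivial algebra. It requires only that the powers of $G$ commute with $A^TA$, and it does not use full column rank.

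For the covariance, I would apply $\mathrm{Cov}(M\varepsilon) = \sigma^2 MM^T$ with $M = \omega\sum_{s}G^sA^T$ (the sign is irrelevant). This gives
\[
\sigma^2\omega^2\Big(\sum_{k=0}^{K-1}G^k\Big)A^TA\Big(\sum_{j=0}^{K-1}G^j\Big)^T .
\]
Since $G = I-\omega A^TA$ is symmetric, $(G^j)^T = G^j$, so the transpose can be dropped. The result matches \eqref{eq:Cov_k}, and also coincides with $\Gamma_K$ in \eqref{eq:3.2.1}.

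I expect no real obstacle. The points to state carefully are the commutation of $G$ with $A^TA$ in the telescoping step and the symmetry of $G$ in the covariance step.
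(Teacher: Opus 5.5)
Your proposal is correct and follows the paper's proof essentially step for step: substitute $y=Ax^\dagger+\varepsilon$ into the $K$-step Landweber formula, telescope $\omega\sum_{s=0}^{K-1}G^sA^TA=I-G^K$ to get the mean, and push the Gaussian noise through the linear map to get the covariance. One small note: the telescoping does not actually need commutation, since $G^s\,\omega A^TA=G^s(I-G)=G^s-G^{s+1}$ directly, and your use of the symmetry of $G$ to drop the transpose is a detail the paper leaves implicit.
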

\begin{remark}In particular, this shows that the error follows a normal distribution with a mean given by the bias term
\begin{equation} \label{bias1}
    \mathbb{E}[x^{\dagger} - x^K] = G^K x^\dagger=\left(I - \sum_{k=0}^{K-1} G^k \omega A^T A \right) x^{\dagger},
\end{equation}
and a covariance given by
\begin{equation} \label{cov1}
    {\rm Cov}(x^{\dagger} - x^K) = \Gamma_K.
\end{equation}
\end{remark}
% \seanc{I've changed the proof a bit and made it shorter.}
\begin{proof}

We know that the Landweber iterates follow the form
\begin{equation}
    x^{(k+1)} = G x^{(k)} + f,
\end{equation}
where 
\begin{equation}
    G = I - \omega A^T A, \quad f = \omega A^T y.
\end{equation}
Recall that the data is given by, 
$ y = A x^{\dagger} + \epsilon, \,\text{where } \epsilon \sim \mathcal{N}(0, \sigma^2 I).
$
Substituting this into the formula for $f$ gives
\begin{equation}
    f = \omega A^T(A x^{\dagger} + \epsilon) = \omega A^T A x^{\dagger} + \omega A^T \epsilon.
\end{equation}
Assuming the iterations to start with $x_0=0$, after $K$ iterations we have
\begin{align}
    x^K =  \sum_{k=0}^{K-1} G^k f= \left(\sum_{k=0}^{K-1} G^k \omega A^T A \right) x^{\dagger} + \sum_{k=0}^{K-1} G^k \omega A^T \epsilon .
\end{align}
The first term, 
$\left(\sum_{k=0}^{K-1} G^k \omega A^T A \right) x^{\dagger}=(I-G^K)x^\dagger,
$
is deterministic, while the second term, 
$\sum_{k=0}^{K-1} G^k \omega A^T \epsilon,
$
is a linear transformation of an independent Gaussian variable $\epsilon \sim \mathcal{N}(0,\sigma^2 I)$. Since Gaussian distributions remain Gaussian under linear transformations and sums, we get that
\begin{equation}
    \sum_{k=0}^{K-1} G^k \omega A^T \epsilon \sim \mathcal{N}\!\left(0, \; \sigma^2 \omega^2 \left (\sum_{k=0}^{K-1} G^k\right) A^T A \left (\sum_{j=0}^{K-1} G^j\right)\right).
\end{equation}
Thus, we obtain
\begin{equation}
    x^K \sim \mathcal{N}\left ((I-G^K) x^{\dagger}, \Gamma_K\right),
\end{equation}
where $\Gamma_K$ is given in \eqref{eq:Cov_k}. Since $x^\dagger$ is deterministic, we therefore obtain 

% Now we compute the distribution of $x^{\dagger} - x^K$.
% Note that,
% \begin{align}
%     x^{\dagger} - x^K &= x^{\dagger} - \left(\sum_{k=0}^{K-1} G^k \omega A^T A\right) x^{\dagger} - \sum_{k=0}^{K-1} G^k \omega A^T \epsilon  \\
%     &=\left(I - \sum_{k=0}^{K-1} G^k \omega A^T A\right) x^{\dagger} - \sum_{k=0}^{K-1} G^k \omega A^T \epsilon\label{eq:4.16}.
% \end{align}
% Thus,
\begin{equation}
    x^{\dagger} - x^K \sim \mathcal{N}\left(G^K x^{\dagger}, \Gamma_K\right).
\end{equation}

\end{proof}
%\begin{remark}
  Theorem \ref{thm:3.1} shows that the reconstruction error consists of two  different components. The stochastic component is induced by the observational noise and its distribution is characterized explicitly by the covariance matrix $\Gamma_K$. The deterministic component, $G^K x^\dagger,$ is the regularization bias introduced by terminating the Landweber iteration after finitely many steps. As a result, the construction of confidence regions later for quantifying uncertainty in the estimation requires a quantitative bound on this bias term along with the stochastic error term. We will first obtain such a bound on the bias term under a standard source condition.
%\end{remark} 

\subsection{Source condition and regularization bias}\label{sec:src}
We impose a standard source condition on the exact solution similar to the ones introduced in  \cite{ hanke_91, hanke_95}.
In fact, since our setting is in finite dimensions, we will assume the following variant of the source condition from \cite{gerth_19,hanke_91}.
\begin{assum}\label{src_assum}
   There exist $\nu>0$, $R>0$, and a vector $w\in\mathbb{R}^n$ with $\|w\|_2\leq M$ such that
\[x^\dagger = (A^T A)^\nu w.
\] 
\end{assum}
\noindent The source condition restricts the class of admissible exact solutions by prescribing their regularity relative to the forward operator. Such a restriction is standard in the analysis of ill-posed inverse problems, since without additional regularity assumptions the convergence of regularized solutions may be arbitrarily slow. The parameter $\nu$ quantifies the smoothness of $x^\dagger$ with respect to $A^T A$ and, consequently, determines the rate at which the regularization error can decay. In the present setting, we use this condition to obtain a quantitative bound on the deterministic bias introduced by terminating the Landweber iteration after finitely many steps. We point the reader to \cite{gerth_19, hanke_91} for a more elaborate discussion of using such source conditions in inverse problems of the kind considered here.

 Let us now recall the following result from \cite[Theorem 3.2 and example 3.1]{hanke_91}. Under the source condition assumed above, the deterministic regularization error decays algebraically with the number of Landweber iterations, with the decay rate determined by the parameter $\nu$. The lemma below gives a precise estimate for an upper bound on this bias term. %, i.e.
%$\|b_k\|\leq C_{\nu}M k^{-\nu}$ where $C_{\nu}$ is a constant that depends on $\nu$ and $\|A\|$, see also \cite[Theorem 3.1]{hanke_95} for a similar result. In fact, the Lemma below gives a more precise estimate for an upper bound on this bias term.  
% \seanc{Reworded the above paragraph slightly to avoid using $b_K$ before it is introduced in the statement of the next Lemma. Adding a remark after the statement of the Lemma with removed content.}
\begin{lemma}
    \label{thm:3.2}
Let us assume the source condition such that for some $\nu>0$,  $x^\dagger=(A^T A)^\nu w,\,
    \|w\|_2\leq M.$ Let us denote an eigendecomposition of, $A^TA$ as 
    \[ A^T A =V\operatorname{diag}(s_1^2,\ldots,s_n^2)V^T,\] where \(s_1,\ldots,s_n\) are the singular values of $A$. Let $G=I-\omega A^T A$ as above. Then the regularization bias after $K$ iterations satisfies
    \[\|G^Kx^\dagger\|_2 \leq  M\max_{1\leq i\leq n}
    \left|1-\omega s_i^2\right|^K(s_i^2)^\nu:=b_K(\nu,A).\] Moreover, if $\omega=\frac{1}{2s_{\max}^2},
    \, s_{\max}:=\max_i s_i,$ then \begin{align} \label{bkest}
    b_K(\nu,A) \leq
    2^\nu s_{\max}^{2\nu}M
    \frac{\nu^\nu K^K}{(\nu+K)^{\nu+K}}:=b_K^{ub}(\nu,A).
\end{align}
where $b_K^{ub}(\nu,A)$ denotes a global upper bound on the bias.
\end{lemma}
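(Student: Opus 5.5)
Diagonalize everything in the eigenbasis $V$ of $A^TA$. Since $G = I-\omega A^TA = V\operatorname{diag}(1-\omega s_i^2)V^T$ and $(A^TA)^\nu = V\operatorname{diag}(s_i^{2\nu})V^T$, with $V$ orthogonal, the source condition gives
\[
G^K x^\dagger = V\operatorname{diag}\big((1-\omega s_i^2)^K s_i^{2\nu}\big)V^T w .
\]
The spectral norm of a symmetric diagonalized matrix is the maximum absolute eigenvalue. Hence $\|G^Kx^\dagger\|_2 \le \max_i |1-\omega s_i^2|^K (s_i^2)^\nu\,\|w\|_2 \le M\max_i|1-\omega s_i^2|^K (s_i^2)^\nu$, which is $b_K(\nu,A)$. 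This first part is routine.

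For the explicit bound, set $\omega = 1/(2s_{\max}^2)$ and write $t_i = \omega s_i^2\in[0,1/2]$. Then $1-t_i\in[1/2,1]$ is positive, so the absolute value can be dropped, and $(s_i^2)^\nu = (2s_{\max}^2)^\nu t_i^\nu$. Thus
\[
b_K(\nu,A) \le 2^\nu s_{\max}^{2\nu} M \sup_{t\in[0,1]} (1-t)^K t^\nu .
\]
Enlarging the range from $[0,1/2]$ to $[0,1]$ only increases the supremum, which is harmless for an upper bound.

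It remains to maximize $\phi(t)=t^\nu(1-t)^K$ on $[0,1]$. The function vanishes at both endpoints and is positive inside. Setting the logarithmic derivative to zero, $\nu/t - K/(1-t)=0$, gives the unique critical point $t^*=\nu/(\nu+K)$. The maximum value is
\[
\phi(t^*) = \frac{\nu^\nu K^K}{(\nu+K)^{\nu+K}},
\]
which yields $b_K^{ub}(\nu,A)$. The cases $K=0$ (convention $0^0=1$, giving $\phi\le 1$) and $t=0$ are handled by continuity.

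The only delicate points are bookkeeping ones rather than genuine obstacles. First, the sign issue $|1-\omega s_i^2| = 1-\omega s_i^2$ needs $\omega s_i^2\le 1$, and the choice of $\omega$ guarantees this. Second, the factor $2^\nu s_{\max}^{2\nu}$ must be extracted correctly from $s_i^{2\nu} = (t_i/\omega)^\nu$.
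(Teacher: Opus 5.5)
Your proof is correct and follows the paper's argument: diagonalize $G^K(A^TA)^\nu$ in the eigenbasis $V$, bound by the spectral norm times $\|w\|_2\le M$, then substitute $t_i=\omega s_i^2\in[0,1/2]$ to extract the factor $2^\nu s_{\max}^{2\nu}$. You also supply the step the paper leaves implicit, namely enlarging the range to $[0,1]$ and locating the maximizer $t^*=\nu/(\nu+K)$ of $t^\nu(1-t)^K$.
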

\begin{remark}
The estimate \eqref{bkest} also shows that $b_K(\nu,A) \leq C_{\nu}M K^{-\nu}$ where $C_{\nu}$ is a constant that depends on $\nu$ and $\|A\|$. See also \cite[Theorem 3.1]{hanke_95} for a similar result. 
\end{remark}
\begin{proof}
Note that, \[
G^K(A^T A)^\nu
=V\operatorname{diag}_n\left((1-\omega s_i^2)^K(s_i^2)^\nu\right) V^T.\] Thus it follows that, \begin{align*}
    \|G^Kx^\dagger\|_2
    &=
    \|G^K(A^T A)^\nu w\|_2 \leq
    \|G^K(A^T A)^\nu\|_2\,\|w\|_2 \\
    &\leq
    M\max_{1\leq i\leq n}
    \left|1-\omega s_i^2\right|^K(s_i^2)^\nu,
\end{align*} Now if we choose, $\omega=\frac{1}{2s_{\max}^2}$ and let $ t_i=\omega s_i^2$ then $0\leq t_i\leq 1/2$. Thus $s_i^{2\nu}= \left(\frac{t_i}{\omega}\right)^\nu =2^\nu s_{\max}^{2\nu}t_i^\nu.$ From this it follows that, 
\[ b_K(\nu,A)
    \leq
    2^\nu s_{\max}^{2\nu}M
    \max_{0\leq t\leq 1/2}
    (1-t)^Kt^\nu\leq 2^\nu s_{max}^{2\nu} M \frac{\nu^\nu K^K}{(\nu+K)^{\nu+K}}.\]
\end{proof}

\subsection{Construction of a guaranteed confidence region}\label{sec:3.5}
{{Recall that a $(1-\alpha)$-level confidence region for an unknown
parameter $x^\dagger \in \mathbb{R}^n$ is a data-dependent random set
$C(Y)\subset \mathbb{R}^n$ satisfying
\[\mathbb{P}_{x^\dagger}\big(x^\dagger\in C(Y)\big) \geq 1-\alpha .
\]
%The probability above is taken with respect to repeated realizations
%of the data under the fixed true parameter $x^\dagger$.
% \seanc{I'd remove the last sentence (and remove ``furthermore" from the next one).} 
% \ac{done.}
$\mathbb{P}_{x^\dagger}(x^\dagger\in C(Y))$ is referred to as the coverage probability
of the confidence region; see, for example,
\cite[Sections 9.1 and 9.3.1]{CB_book}. Thus, the randomness in the construction of the confidence region
arises from the randomness of the observed data $Y$. Once a particular
realization $Y=y$ is observed, $C(y)$ is the corresponding realized
confidence region.}}
We first construct a confidence region under the assumption that the covariance matrix $\sigma^2\Sigma_K$ can be evaluated exactly for some stopping index, $K$, whose choice will be explained below. To that end, define $\delta_K(\nu,A):=\frac{b_K^{ub}(\nu,A)}{\sqrt{\lambda_{\min}(\Gamma_K)}},
$ where $\lambda_{\min}(\Gamma_K)$ denotes the smallest eigenvalue of  $\Gamma_K$. We denote by $\chi_n^2$ a chi-squared random variable with $n$ degrees of freedom, and by $\chi^2_{n,1-\alpha}$ its $(1-\alpha)$-quantile, i.e., \[     \mathbb{P}\left(  \chi_n^2 \leq \chi^2_{n,1-\alpha}     \right)=1-\alpha . \] 
%For ease of notation, we will also denote $\sigma^2\Sigma_K:=\Gamma_K$ from this point on. 
Then we have the following theorem.
\begin{theorem}\label{thm:3.3}
Suppose that assumptions of Lemma \ref{thm:3.2} hold. For $0<\alpha<1$, define \begin{align}
C_K(y)
:=
\left\{
x:\ (x-x^K_\sigma)^T(\Gamma_K)^{-1}(x-x^K_\sigma)
\le
\Big(\sqrt{\chi^2_{n,1-\alpha}}+\delta_K(\nu,A)\Big)^2
\right\}. 
\end{align}
 Then, \[
\mathbb{P}_{x^\dagger}\left(x^\dagger\in  C_K(Y)\right)\ \geq\ 1-\alpha.
\]
Hence, $C_K(Y)$ is a $(1-\alpha)$-level confidence region for
$x^\dagger$.
\end{theorem}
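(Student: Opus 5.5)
Write $x^\dagger - x^K_\sigma = G^K x^\dagger - Z$, where $Z := \omega\sum_{k=0}^{K-1}G^kA^T\epsilon$ is the stochastic part isolated in the proof of Theorem \ref{thm:3.1}. By that theorem, $Z\sim\mathcal N(0,\Gamma_K)$. I will assume $\Gamma_K$ is invertible; the region $C_K$ is only defined through $\Gamma_K^{-1}$ anyway. This holds when $A$ has full column rank, since then $\sum_k G^k$ is a polynomial in $A^TA$ with positive eigenvalues $\bigl(1-(1-\omega s_i^2)^K\bigr)/(\omega s_i^2)$.

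Work in the Mahalanobis norm $\|v\|_{\Gamma_K^{-1}} := \|\Gamma_K^{-1/2}v\|_2$. The event $x^\dagger\in C_K(Y)$ is exactly
\[
\|G^Kx^\dagger - Z\|_{\Gamma_K^{-1}} \le \sqrt{\chi^2_{n,1-\alpha}}+\delta_K(\nu,A).
\]
By the triangle inequality, $\|G^Kx^\dagger - Z\|_{\Gamma_K^{-1}} \le \|Z\|_{\Gamma_K^{-1}} + \|G^Kx^\dagger\|_{\Gamma_K^{-1}}$. It therefore suffices to show two things: that $\|G^Kx^\dagger\|_{\Gamma_K^{-1}}\le\delta_K(\nu,A)$ deterministically, and that $\mathbb P\bigl(\|Z\|_{\Gamma_K^{-1}}\le\sqrt{\chi^2_{n,1-\alpha}}\bigr)=1-\alpha$.

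For the stochastic term, $\Gamma_K^{-1/2}Z\sim\mathcal N(0,I_n)$, so $\|Z\|^2_{\Gamma_K^{-1}}\sim\chi^2_n$. The required probability is then exactly $1-\alpha$ by the definition of the quantile $\chi^2_{n,1-\alpha}$.

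For the bias term, use the spectral bound $\|\Gamma_K^{-1/2}v\|_2\le\lambda_{\min}(\Gamma_K)^{-1/2}\|v\|_2$. Lemma \ref{thm:3.2} gives $\|G^Kx^\dagger\|_2\le b_K(\nu,A)\le b_K^{ub}(\nu,A)$, the second inequality requiring the choice $\omega=1/(2s_{\max}^2)$ assumed in that lemma. Together these yield $\|G^Kx^\dagger\|_{\Gamma_K^{-1}}\le\delta_K(\nu,A)$.

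Combining the two bounds, the event $\{\|Z\|_{\Gamma_K^{-1}}\le\sqrt{\chi^2_{n,1-\alpha}}\}$ is contained in $\{x^\dagger\in C_K(Y)\}$, so the coverage is at least $1-\alpha$.

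There is no real obstacle here. The only point needing care is the bookkeeping: the randomness of $C_K(Y)$ enters only through $x^K_\sigma$, while $\Gamma_K$, $\delta_K$ and $x^\dagger$ are deterministic once $K$ is fixed. This keeps the inclusion of events valid. If $K$ were chosen in a data-dependent way, the argument would need revisiting, but the statement treats $K$ as fixed.
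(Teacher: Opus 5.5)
Your proof is correct and follows the same route as the paper: the bias--noise decomposition from Theorem~\ref{thm:3.1}, the triangle inequality in the $\Gamma_K^{-1}$-norm, the $\chi^2_n$ law of the noise term, and the bias bound from Lemma~\ref{thm:3.2} combined with $\lambda_{\min}(\Gamma_K)$. Your sign bookkeeping ($G^Kx^\dagger - Z$ rather than the paper's $+Z_K$, harmless by symmetry of the Gaussian) and your explicit treatment of the invertibility of $\Gamma_K$ and of the spectral-norm step are slightly more careful than the paper, but they do not change the argument.
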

\begin{proof}
Recall that \eqref{errordist} implies that 
 \[x^{\dagger}-x_{\sigma}^K=G^Kx^\dagger+Z_K\] where $Z_K\sim N(0,\Gamma_K)$. Thus, $G^Kx^\dagger$ is the deterministic regularization bias,
whereas $Z_K$ is random reconstruction error. In order to construct confidence regions we look at the distribution of the random variable given by $(x^\dagger-x_{\sigma}^K)^T\Gamma_K^{-1}(x^\dagger-x_{\sigma}^K):=\|(x^\dagger-x_{\sigma}^K)\|^2_{\Gamma^{-1}_K}$. 
By the triangle inequality,  \[
\|x^\dagger-x^k_{\sigma}\|_{{\Gamma_K}^{-1}}
\leq
\|Z_K\|_{{\Gamma_K}^{-1}} + \|G^K x^\dagger\|_{{\Gamma_K}^{-1}}.
\]  Since $Z_K\sim N(0,\Gamma_K)$, it follows that
$\Gamma_K^{-1/2}Z_K\sim N(0,I_n).
$
Thus
$\|Z_K\|_{(\Gamma_K)^{-1}}^2
\sim
\chi_n^2.
$ Therefore, by the definition of $\chi^2_{n,1-\alpha}$,
\[\mathbb{P}\left(
\|Z_K\|_{\Gamma_K^{-1}}\leq
\sqrt{\chi^2_{n,1-\alpha}}
\right)
=1-\alpha.
\]
Also, from Lemma \ref{thm:3.2} and from the definition of $\delta_K(\nu, A)$ we have, 
$\|G^Kx^\dagger\|_{(\Gamma_K)^{-1}}
\leq
\delta_K(\nu,A).
$ Thus, on the event
$\left\{
\|Z_K\|_{\Gamma_K^{-1}}\leq
\sqrt{\chi^2_{n,1-\alpha}}\right\},
$
we have
\[
\|x^\dagger-x_\sigma^K\|_{\Gamma_K^{-1}}
\leq\sqrt{\chi^2_{n,1-\alpha}}+\delta_K(\nu,A).
\]
Equivalently,
\[
(x^\dagger-x_\sigma^K)^T
\Gamma_K^{-1}
(x^\dagger-x_\sigma^K)
\leq\Big(
\sqrt{\chi^2_{n,1-\alpha}}
+\delta_K(\nu,A)
\Big)^2,
\]
which implies that $x^\dagger\in C_K(Y)$. Hence,
\[\mathbb{P}_{x^\dagger}\left(x^\dagger\in C_K(Y)
\right)\geq \mathbb{P}\left(
\|Z_K\|_{\Gamma_K^{-1}}\leq
\sqrt{\chi^2_{n,1-\alpha}}
\right)
=1-\alpha.
\]
\end{proof}
% \seanc{The result above looks correct, but I think the proof needs a bit more. First, the random vector $Z_K$ should be defined. Second, the meaning of $\|b_K \|_{\Lambda_K^{-1}}$ is not totally clear since $b_K$ is a constant, not a vector. Actually, after reading rest of the section I'm not sure I understand the notation $b_K$ and, as far as I can see, it's only been defined as a scalar prior to this. Maybe you mean to define the vector $b_K = G^K x^\dagger$?}
% \ac{Fixed this! You are right, the notation was sloppy and I meant to use $G^K x^\dagger$ for the bias vector and use the estimate on its norm as above.}
% \seanc{Below, I suggest using $\Gamma$ rather than $\Sigma$ for consistency. Also, I think we should explain the ``pixel-wise" UQ before we describe the technique for finding K below.}
% \ac{done!}
While Theorem~\ref{thm:3.3} provides a joint ellipsoidal confidence
region for the unknown parameter $x^\dagger$, in imaging applications
it is often more useful to quantify uncertainty separately at each
pixel. We therefore also consider marginal, or pixelwise, confidence
regions obtained from the distribution of each coordinate of the
reconstruction error. Recall that $x^\dagger-x_\sigma^K=G^Kx^\dagger+Z_K, \, Z_K\sim N(0,\Gamma_K).$
Hence, for each pixel $j=1,\ldots,n$,
\[(Z_K)_j \sim N\left(0,(\Gamma_K)_{jj}\right).\] Also, trivially, $|(G^Kx^\dagger)_j|\leq
b_K^{ub}(\nu,A)$. From this, we obtain
\begin{align}
 \label{pixelUQ}   \mathbb{P}
\left(x^\dagger_j \in \bigg\{(x_\sigma^K)_j \pm\,
\Bigl( z_{1-\alpha/2}\,\sqrt{(\Gamma_K)_{jj}} + b_K^{ub}(\nu,A) \Bigr)\bigg\}\right)\geq 1-\alpha.
\end{align}
We now explain the mechanism by which the stopping index, $K$ is evaluated. This is done by balancing the regularization bias with the stochastic uncertainty captured by the variance term. In fact, let
$v_k(\sigma,A):=
\max_{1\leq j\leq n}
\sqrt{(\Gamma_k)_{jj}},$ and define
\begin{align}\label{exact_stop}
K:=K(\sigma)=\inf
\left\{
k\geq 1:
b_k^{ub}(\nu,A)
\leq
c_0 v_k(\sigma,A)
\right\},
\end{align} for some fixed constant $c_0>0$ and where $b_k^{ub}(\nu,A)$ and $\Gamma_k$ are computed at any iteration $k$ of the Landweber method by an obvious modification of the formulas for calculating $\Gamma_K$ and  {{$b_{K}^{ub}(\nu,A)$ from Theorem \ref{thm:3.1} and Lemma \ref{thm:3.2} respectively.}} Since the stopping index $K(\sigma)$ depends on the noise level and the forward problem, but not on the realization of the data $Y=y$. Hence Theorem \ref{thm:3.3} applies in this case with $K=K(\sigma)$ as chosen. 

{Computation of the early-stopping index $K$ using \eqref{exact_stop} as well as construction of confidence regions using \eqref{pixelUQ} require knowledge of the diagonal entries, $\gamma_{K,j}:=(\Gamma_K)_{jj}$. We may estimate these diagonal entries using tools from randomized linear algebra. In particular, we use a Monte Carlo method for estimating the diagonal entries of a real-symmetric matrix as in \cite[section 5]{hallman2023}. We describe this construction here briefly. {{Recall that a Rademacher random variable is a random variable that takes value $\pm 1$ with equal probability, $1/2$. A Rademacher random vector is a vector whose every element is an independent Rademacher random variable.}} Let $w_1,\ldots,w_N\in\mathbb{R}^n
$ be independent Rademacher vectors and define
\begin{align}\label{mc_est}\widehat\gamma_{K,j}
=\frac1N\sum_{\ell=1}^N
\left(\Gamma_Kw_\ell\right)_j(w_\ell)_j,\quad
\gamma_{K,j}:=(\Gamma_K)_{jj}.
\end{align} By \cite[Theorem 3.5 (1)]{hallman2023}, it is known that $\widehat\gamma_{K,j}$ is an unbiased estimator of $\gamma_{K,j}$. Furthermore from \cite[Corollary 5.4]{hallman2023} it is known that for prescribed tolerance $0<\epsilon<1$ and failure probability $0<\delta<1$, \begin{align}
P\left(|\widehat\gamma_{K,j}-\gamma_{K,j}|
\leq\varepsilon\gamma_{K,j}\right)\geq 1-\delta,
\end{align} provided the number of Rademacher probes $
N\geq
\frac{2}{\varepsilon^2}
\left[\frac{\|(\Gamma_K)_{:j}\|_2^2-\gamma_{K,j}^2
}{\gamma_{K,j}^2}
\right]\log\left(\frac{2}{\delta}\right)
$ where $(\Gamma_K)_{:j}$ denotes the $j-th$ row of the matrix, see also \cite[Definition 2.6]{hallman2023}. Finally, on the event
$|\widehat\gamma_{K,j}-\gamma_{K,j}|
\leq
\varepsilon\gamma_{K,j},$
we have $(1-\varepsilon)\gamma_{K,j}
\leq\widehat\gamma_{K,j},
$ and hence
$\gamma_{K,j}
\leq
\frac{\widehat\gamma_{K,j}}{1-\varepsilon}.
$ Thus if we define an 
inflated pixelwise confidence region
$$
\widehat C_{K,j}(Y)
=\left\{
(x_\sigma^K)_j\pm
\left[z_{1-\alpha/2}
\sqrt{\frac{\widehat\gamma_{K,j}}{1-\varepsilon}
}+b_K^{ub}(\nu,A)\right]\right\}.
$$ then we have
\begin{align}\label{eq:3.26}
P\left(x_j^\dagger\in\widehat C_{K,j}(Y)\right)\geq(1-\alpha)(1-\delta).
\end{align}
While \eqref{eq:3.26} is a recipe for producing valid pixelwise confidence intervals, its utility for doing practical uncertainty quantification is somewhat limited. To understand why this may be the case, let us recall that the exact propagated
covariance matrix $\Gamma_k$ for any iteration $k\leq K$ can be written in terms of the following eigendecomposition,
\[
\Gamma_k=
\sigma^2
V\operatorname{diag}_n
\left(\frac{\left(
1-(1-\omega s_i^2)^k
\right)^2
}{s_i^2}\right)
V^T.
\]
where the $s_i$ are singular values of $A$. Thus the eigenvalues of the covariance matrix $\Gamma_k$ %associated with the $i-$th singular vector with corresponding singular value $s_i^2$ of the matrix $B:=A^TA$ is 
are given by
$\eta_i(k)=\sigma^2
\frac{\left(1-(1-\omega s_i^2)^k\right)^2}{s_i^2}.$
% \seanc{Did some minor rewriting above. Please check.}
% \ac{Checked!}
Now for each $i$ as $k$ grows $(1-(1-\omega s_i^2)^k)\to 1$ and as such $\eta_i(k)\approx \frac{\sigma^2}{s_i^2}$ which becomes large for small $s_i^2$ and for some non-zero $\sigma^2$. Note that early stopping does offer some form of regularization so that $(1-(1-\omega s_i^2)^k)\leq 1$ for a finite stopping index $K$, nonetheless the contribution from eigenvectors corresponding to small eigenvalues $s_i^2$ of the matrix $A^TA$ start having an oversized effect on the width of the confidence intervals. Additionally, the issue is exacerbated by the fact that the global bias estimate $b_K^{ub}(\nu,A)$ may be too pessimistic for constructing practically useful confidence regions as it will take a large number of Landweber iterations to bring the global bias estimate to match with the variance term at any given noise level, $\sigma$. This motivates treating the contribution from the larger eigenvalues of $A^TA$ separately from that of the smaller eigenvalues, since the latter correspond to poorly resolved directions and can lead to substantial noise amplification. In the next section, we therefore introduce a randomized rank-$r$ approximation $\widetilde\Gamma_{K,r}$ for the principal covariance contribution and account for the omitted spectral component separately. This additional regularization is in the same spirit as the one introduced in \cite{ito2020} wherein regularization offered by using randomized truncated SVD approximation was used in conjunction with another classical regularization method, such as Tikhonov.
}

% Finally, we note that the confidence region calculated above is expressed in terms of the exact $\Gamma_K$. However computing this in large scale inverse-problems requires knowing the spectral decomposition of $A^TA$ and then iteratively calculating $\Gamma_K$ by applying Theorem \ref{thm:3.1}. This makes evaluating the exact $\Gamma_K$ computationally prohibitive. In the next subsection, we show how to leverage modern randomized numerical linear algebra techniques to efficiently approximate the covariance matrix $\Gamma_K$ and subsequently how to account for the approximation in giving confidence regions with appropriate coverage results.

\subsection{Randomized SVD for practical UQ}\label{sec:3.6}

% \seanc{I've done a bit of rewriting of the next two paragraphs.}
 
{{Our basic idea is to introduce a low rank approximation $\widetilde{B}_r \approx B := A^T A$ for the purpose of practical uncertainty quantification since computation of the full matrices $\Gamma_k$ is not practrical for large scale problems. However, we emphasize that our point-estimate for the truth comes still from the Landweber iteration \eqref{eq:LW_est}. It is only in evaluating the propagated uncertainty that we resort to randomized approximation of $B$. Hence our method is analogous to the ones proposed in \cite{ito2020}, where randomized SVD is combined with classical regularization methods, such as Tikhonov regularization, rather than being used as a replacement for regularization itself. This is also consistent with the source condition imposed in section \ref{sec:src}, i.e $x^\dagger=(A^TA)^\nu w$. If instead, we replaced $A^TA:=B$ in the Landweber iteration with an approximation, $\widetilde{B}_r$, then the corresponding Landweber iterates based on $\widetilde B_r$ would remain confined to its range, $\mathcal R(\widetilde B_r)$. Consequently, components of $x^\dagger$ lying outside the retained rank-$r$ spectral subspace could not be recovered, introducing an additional truncation bias into the point estimate. See also the discussion in \cite [section 3]{ito2020} where the authors are motivated by similar considerations when combining the randomized SVD approach with classical regularization methods.}}
\par {{Motivated by the discussion in the previous section, we now introduce the randomized low-rank approximation for practical uncertainty quantification of ill-posed inverse problems. Before we introduce our ideas, let us identify the two sources of inefficiency in how we construct the pixelwise confidence intervals that make the approach explained in section \ref{sec:3.5} theoretically valid but practically almost useless. First of all, in computing the stopping rule, we resorted to using a global bias bound which severely overestimates the true bias at each pixel. This forces the stopping rule given by \eqref{exact_stop} to be attained later as the global bias bound decreases very slowly. Now for the reason explained towards the end of section \ref{sec:3.5}, a later stopping rule inflates the width of the confidence region because the effect of smaller singular values, $s_i^2$, of the exact normal operator $B=A^TA$ starts to overpower the regularizing effect due to early stopping as stopping index becomes large. In our numerical simulations, we compute the stopping rule by replacing the overall global bias estimate by a bias estimate coming from the truncated randomized SVD and balance it against a variance term coming from the same randomized SVD approximation. If we think of randomized SVD as an additional source of regularization on top of the regularization provided by early stopping in classical Landweber, then this innovation is in the same spirit  as of \cite{ito2020}. These ideas will be rigorously explored in the subsections below. We first begin with an analysis of the approximate covariance and an approximation to the global bias via (deterministic) truncated SVD (TSVD). Subsequently, we will practically implement these approximations with the help of randomized SVD.   }}

\subsubsection{Bias and Covariance approximation for TSVD}\label{sec:3.7}

To introduce our idea, first consider the truncated rank $r-$ SVD approximation of $A^T A = B\approx B_r=V_r
\operatorname{diag}_r(s_1^2,\ldots,s_r^2)
V_r^T$ in computations, which would correspondingly yield the following approximate covariance matrix \[\Gamma_{K,r}=\sigma^2 V_r \operatorname{diag}
\left(\frac{\left(1-(1-\omega s_i^2)^K\right)^2}{
s_i^2}\right)_{i=1}^r
V_r^T\] on substituting in \eqref {errordist}. Here $V_r$ is the truncated matrix whose columns are the same as the first $r-$ columns of the matrix $V$. 
Let us consider the projection matrix, $P_r:=V_rV_r^T$. The following lemma gives us an estimate on the contribution to the bias of the part of $x^\dagger$ outside of the range of $P_r.$
\begin{lemma}\label{lem:bias_split}
 Let assumption \ref{src_assum} hold. For ease of notation, we denote the spectral norm $\rho(G):=\rho<1$ for the choice of relaxation parameter $\omega$ as in section \ref{sec:3.1}. Then, the bias term in Theorem \ref{thm:3.1} can be written as$$G^Kx^\dagger=G^KP_rx^\dagger+G^K(I-P_r)x^\dagger$$ where
 \begin{align}
    \|G^KP_rx^\dagger\|_2
\leq
M
\max_{1\leq i\leq r}
|1-\omega s_i^2|^K
s_i^{2\nu}, \,\text{ and }\,  \|G^K(I-P_r)x^\dagger\|_2
\leq
\rho^K M s_{r+1}^{2\nu} .
 \end{align}
\end{lemma}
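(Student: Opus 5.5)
The decomposition is immediate from linearity: since $P_r + (I-P_r) = I$, we have $G^Kx^\dagger = G^KP_rx^\dagger + G^K(I-P_r)x^\dagger$. The plan is to diagonalize everything in the eigenbasis $V$ of $B=A^TA$ from Lemma \ref{thm:3.2}, with the singular values ordered $s_1\ge s_2\ge\cdots\ge s_n$. In this basis $G^K$, $(A^TA)^\nu$ and $P_r$ are all diagonal, so they commute. Concretely,
\[
G^K = V\operatorname{diag}_n\big((1-\omega s_i^2)^K\big)V^T,\qquad P_r = V\operatorname{diag}_n(\mathbbm{1}_{i\le r})V^T .
\]
Substituting $x^\dagger=(A^TA)^\nu w$ from Assumption \ref{src_assum} then gives
\[
G^KP_rx^\dagger = V\operatorname{diag}_n\big(\mathbbm{1}_{i\le r}(1-\omega s_i^2)^K s_i^{2\nu}\big)V^Tw,
\]
and similarly for $I-P_r$ with the indicator $\mathbbm{1}_{i>r}$.

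For the first bound I will argue exactly as in the proof of Lemma \ref{thm:3.2}. The spectral norm of an orthogonally diagonalized symmetric matrix is the largest absolute diagonal entry. Combined with $\|w\|_2\le M$, this gives $\|G^KP_rx^\dagger\|_2\le M\max_{1\le i\le r}|1-\omega s_i^2|^K s_i^{2\nu}$.

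For the second term, the same reasoning gives $\|G^K(I-P_r)x^\dagger\|_2\le M\max_{i>r}|1-\omega s_i^2|^K s_i^{2\nu}$. I then bound the two factors separately. First, $|1-\omega s_i^2|\le \rho(G)=\rho$ for every $i$, because these numbers are the eigenvalues of the symmetric matrix $G$, whose spectral norm is $\rho$. Second, $s_i^{2\nu}\le s_{r+1}^{2\nu}$ for $i\ge r+1$, by the decreasing ordering and $\nu>0$. Together these yield $\rho^K M s_{r+1}^{2\nu}$. (The bound $\rho<1$ requires $A$ to have full column rank, as noted in Section \ref{sec:3.1}. If it did not, the estimate with $\rho\le1$ would still hold.)

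There is no real obstacle here. The only point needing care is to state explicitly that the singular values are ordered decreasingly, so that $V_r$ spans the top-$r$ eigenspace, and to handle possibly repeated singular values at the cutoff, where $P_r$ depends on the choice of $V$. The argument above is insensitive to that choice, since it works with whichever fixed eigenbasis is used to define $P_r$.
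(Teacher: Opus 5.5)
Your proof is correct and follows essentially the same spectral argument as the paper. The only cosmetic difference is that the paper first bounds $\|(I-P_r)x^\dagger\|_2\le M s_{r+1}^{2\nu}$ and then applies $\|G^K\|_2\le\rho^K$, whereas you bound the diagonal entries $|1-\omega s_i^2|^K s_i^{2\nu}$ for $i>r$ directly; your explicit note that the $s_i$ must be ordered decreasingly, which the paper leaves implicit, is a worthwhile addition.
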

% \seanc{$\rho$ needs to be defined. Also, is $\lambda_i = s_i^2$? We should stick with the use of $\lambda_i$ or $s_i$ only and be consistent. This also goes with the definition of $\delta_k$ above.}
% \ac{done.}
\begin{proof}
    Writing
$w=\sum_{i=1}^n w_i v_i,
$ the source condition gives
$x^\dagger=B^\nu w=\sum_{i=1}^n
s_i^{2\nu} w_i v_i.
$ Thus if we look at the projection of the bias onto the first $r-$ eigenvectors of $B$, then we have \[\|G^KP_rx^\dagger\|_2
\leq
M
\max_{1\leq i\leq r}
|1-\omega s_i^2|^K
s_i^{2\nu}.\] Finally the contribution to the bias from the tail is given by
$(I-P_r)x^\dagger
=\sum_{i=r+1}^n
s_i^{2\nu} w_i v_i,
$
and hence
\[
\|(I-P_r)x^\dagger\|_2^2
=\sum_{i=r+1}^n
s_i^{4\nu}|w_i|^2 \leq
s_{r+1}^{4\nu}
\sum_{i=r+1}^n|w_i|^2 \\
\leq
s_{r+1}^{4\nu}\|w\|_2^2
\leq
M^2s_{r+1}^{4\nu}.
\]
Moreover, since
$\|G\|_2=\rho<1$,
\[
\|G^K(I-P_r)x^\dagger\|_2
\leq
\rho^K\|(I-P_r)x^\dagger\|_2
\leq \rho^K M s^{2\nu}_{r+1}.\]
\end{proof}
The proof of Lemma \ref{lem:bias_split} tells us that for large enough $r$ and $K$, the contribution from the tail term may in fact be even smaller if the projection of the truth $x^\dagger$ onto the subspace given by $(I-P_r)$ is small, i.e. $\{w_i\}_{i=r+1}^n$ is small.
\begin{remark}\label{rem:2}
   The tail estimate in Lemma \ref{lem:bias_split} can be sharpened by retaining the actual contribution of $w$ in the omitted eigendirections. Indeed,
$(1-P_r)x^\dagger = \sum_{i=r+1}^n
s_i^{2\nu} \langle w,v_i\rangle v_i,
$ so that $
\|(I-P_r)x^\dagger\|_2
\leq
s_{r+1}^{2\nu}
\left(
\sum_{i=r+1}^n
|\langle w,v_i\rangle|^2
\right)^{1/2}.
$ Hence,
$$
|G^K(I-P_r)x^\dagger|2
\leq
\rho^K s_{r+1}^{2\nu}
\left(
\sum_{i=r+1}^n
|\langle w,v_i\rangle|^2
\right)^{1/2}.
$$
Thus, the tail contribution will be small not only when $s_{r+1}$ is small, but also when $w$ has little contribution in the eigendirections $v_{r+1},\ldots,v_n$. The bound in Lemma \ref{lem:bias_split} follows by replacing this tail norm by the global bound $\|w\|_2\leq M$. While verifying that such a condition would require knowledge of $x^\dagger$, in our numerical simulations we indeed find this to be the case. This motivates the stopping rule \eqref{eq:3.36} %\seanc{I fixed this reference, and I think this is correct but please check.}
below in which we ignore the contribution from the tail in the bias to suggest a stopping rule that is better suited for practical UQ than the one given above, \eqref{exact_stop}.
\end{remark}

Evaluating the truncated SVD (TSVD) still requires knowledge of the (partial) SVD of $A^TA$ which for a large matrix is inefficient to compute. To propose a stopping rule on the basis of TSVD is thus computationally infeasible for large scale inverse problems. Thus, in this project we propose to use a truncated randomized SVD (rSVD) approximation in the manner described in the subsection below.

% $B\approx \widetilde B_r$ such that $\widetilde B_r$ is also a symmetric positive semi-definite matrix. The corresponding approximation to covariance matrix will then be denoted, $\widetilde{\Gamma}_{K,r}$. The proposed stopping rule is obtained by balancing the pixelwise approximate variance term against the approximate bias obtained from the rSVD. Working with approximate covariance would also require the confidence regions to be suitably inflated in order to achieve proper coverage as we show below.
\subsubsection{Stopping rule and confidence regions from rSVD}\label{sec:3.6.2}
% In the previous subsection we considered a deterministic rank $r$- truncated SVD approximation of the normal operator, $B:=A^TA$. In particular, we rigorously analysed the contribution to the propagated covariance and regularization bias from the first $r$ eigenvalues of the the normal operator and in fact, Remark \ref{rem:2} shows that contribution to the bias from the emitted eigendirections may be small if a large enough rank $r$ is chosen and simultaneously the component of $w$ in the omitted eigen directions $\{v_{i}\}_{i=r+1}^n$is small. We recall that from the source condition we assume the existence of a $w, \|w\|<M$, such that $x^\dagger = B^{\nu} w$. \seanc{I'd advocate cutting everything in this section above here, since I think it is repeating things that have already been said.}
As computing the deterministic truncated SVD is computationally infeasible for large-scale inverse problems, we
replace the truncated SVD approximation by a randomized rank-$r$ approximation and use it to construct the
stopping rule and the corresponding uncertainty quantification procedure. We start by constructing a randomized rank $r-$ approximation to the matrix $B\approx \widetilde{B}_r=\widetilde{V_r}\widetilde{\Lambda}_r\widetilde{V}_r^T$ using \cite[Algorithm 5.3]{halko2011} as described in Lemma \ref{lem:1}. {{We note that constructing such a randomized rank $r$-approximation requires a choice of an `oversampling parameter' $p\geq 4$ alongwith the target rank $r$. For details, we refer the reader to \cite{halko2011}.}} Subsequently, the approximate covariance matrix is computed using an analog of the formula from Theorem \ref{thm:3.1} in the following way. Let us denote $\widetilde{G}_r=(I-\omega \widetilde{B}_r).$ For any iteration $k$, we have the approximated propagated covariance given by:
\begin{align}
  \widetilde\Gamma_{k,r}
&=\sigma^2\omega^2
\left(\sum_{j=1}^{k-1} \widetilde{G}_r^j \right)\widetilde{B}_r\left(\sum_{m=1}^{k-1}\widetilde{G}_r^m\right) \nonumber\\
&=
\sigma^2
\widetilde V_r
\operatorname{diag}
\left(
\frac{
\left(
1-(1-\omega\widetilde{s}_i^2)^k
\right)^2
}{
\widetilde{s}_i^2
}
\right)_{i=1}^r
\widetilde V_r^T.
\end{align}
Its diagonal entries are given explicitly by
\begin{align}
\label{eq:rsvd_cov_diag}
\widetilde{\gamma}_{k,r,j}:=(\widetilde\Gamma_{k,r})_{jj}
=\sum_{i=1}^r
(\widetilde V_r)_{ji}^2
\sigma^2\frac{\left(1-(1-\omega\widetilde{s}_i^2)^k
\right)^2
}{\widetilde{s}_i^2
}.
\end{align}
Similarly, motivated by remark \ref{rem:2}, we define the randomized bias approximation at any iteration $k$ as:
\begin{align}
\label{eq:rsvd_bias}
\widetilde b_{k,r}
:= M\max_{1\leq i\leq r}
\left|1-\omega\widetilde{s}_i^2\right|^k\widetilde{s}_i^{2\nu}.
\end{align}
We note that the quantity $\widetilde{b}_{k,r}$ is only an approximation of the true bias at iteration $k$ of the Landweber iteration and we will separately account for the part of the bias not captured by the randomized SVD approximation below. We next account for the error introduced by replacing the exact propagated covariance $\Gamma_k$ by the randomized approximation $\widetilde{\Gamma}_{k,r}$. Rather than estimating the diagonal entries of the entire covariance matrix $\Gamma_k$ using Monte Carlo sampling, we estimate only the diagonal entries of the residual matrix
\[
D_{k,r}
:=
\Gamma_k-\widetilde{\Gamma}_{k,r}.
\]
Since both $\Gamma_k$ and $\widetilde{\Gamma}_{k,r}$ are symmetric, the matrix $D_{k,r}$ is symmetric. Thus, the Monte Carlo diagonal approximation results of \cite{hallman2023} can be applied directly to $D_{k,r}$. Let $w_1,\ldots,w_N\in\mathbb R^n$ be independent Rademacher vectors, and define
\[
d_{k,r,j}
:=(D_{k,r})_{jj}
=(\Gamma_k-\widetilde{\Gamma}_{k,r})_{jj}.
\]
We estimate this quantity using
\begin{align}\label{residual}\widehat d_{k,r,j}
=\frac{1}{N}
\sum_{\ell=1}^N
(D_{k,r}w_\ell)_j
(w_\ell)_j.
\end{align}
Similar to before, from \cite[Corollary 5.4]{hallman2023} for prescribed
$0<\epsilon<1$ and $0<\delta<1$, and for sufficiently large $N$,
\[
\mathbb P
\left(\left|\widehat d_{k,r,j}
-d_{k,r,j}
\right|\leq\epsilon
\left| d_{k,r,j}\right|
\right)\geq
1-\delta.
\]
%Furthermore, since $(\Gamma_k)_{jj}=(\widetilde{\Gamma}_{k,r})_{jj}+ (D_{k,r})_{jj}$. 
Thus, on the event {{$\left|\widehat d_{k,r,j}
-d_{k,r,j}
\right|\leq\epsilon \left| d_{k,r,j} \right|$}} we have {{$\left | d_{k,r,j}\right|\leq \frac{\left|\widehat{d}_{k,r,j}\right| }{1-\epsilon} $. Furthermore we note that the diagonal entries of both the covariance matrix and its randomized approximation are non-negative on account of being positive semi-definite (PSD) matrices. In this case, since $(\Gamma_k)_{jj}=(\widetilde{\Gamma}_{k,r})_{jj}+ (D_{k,r})_{jj}$, we get
\[(\Gamma_k)_{jj}\leq(\widetilde{\Gamma}_{k,r})_{jj}+ \frac{\left|\widehat{d}_{k,r,j}\right| }{1-\epsilon}.\]
% \seanc{Somthing is wrong with the reasoning above. We may need to slightly change this.}
We note that the advantage of estimating just the residual $(D_{k,r})_{jj}$ over estimating the entirety of $(\Gamma_{k})_{jj}$ using a Monte Carlo technique is that as shown in \cite[Corollary 5.1]{hallman2023}, the componentwise diagonal-estimation error depends on the off-diagonal contribution in the corresponding row of the matrix whose diagonal is being estimated. Hence, when the randomized rank $r-$ SVD approximation captures the dominant covariance structure well, estimating the
diagonal of $D_{k,r}$ can be substantially more accurate than that of estimating $\Gamma_k$ from same number of Rademacher samples. Then similar to what was done  in section \ref{sec:3.7}, we therefore define the variance quantity used in the stopping rule as
\begin{align}\label{eq:3.35}
\widehat{v}_{k,r,N}(\sigma,A)
:=\max_{1\le j\le n}
\sqrt{\left|\left(\widetilde{\Gamma}_{k,r}\right)\right|_{jj}
+\frac{\left|\widehat{d}_{k,r,j}\right|}{1-\epsilon}
}.
\end{align}
The stopping index is then defined by
\begin{align}\label{eq:3.36}
K_{r,N}(\sigma)
:=\inf\left\{
k\ge 1:\widetilde{b}_{k,r}
\le \widehat{v}_{k,r,N}(\sigma,A)
\right\}.
\end{align} 
% \seanc{Is $c_0$ necessary? If so, how is it chosen?}
% \ac{removed it, the `balancing' only requires them to be of the 'same' order. But in our simulations we had $c_0=1$}
Thus similar to the exact SVD setting of section \ref{sec:3.7}, the stopping rule given by \eqref{eq:3.36} balances the approximation to the deterministic regularization bias against an approximation to the propagated pixelwise uncertainty given by the diagonal terms of the covariance matrix. We also note that for a given noise level $\sigma$ and for the choices of retained rank $r$, oversampling parameter, $p$ 
% \seanc{$p$ has not been mentioned so far.}
% \ac{mentioned above in blue at the start of section 3.6.2 now.}
and number of Rademacher random vectors, $N$, this quantity is computed offline independently of the observed data. 

We will now explain our construction of the pixelwise confidence intervals when working with the stopping rule \eqref{eq:3.36}. First, in line with our observation in Remark \ref{rem:2}, under the source condition $x^\dagger =B^\nu w$, the contribution of the omitted eigendirection to the regularization bias is small, if the components of $w$ are small along those omitted eigendirections. We formalize this observation in the following assumption.
\begin{assum}\label{assum2}
 Let
$\widetilde P_r := \widetilde V_r \widetilde V_r^{T}
$
denote the orthogonal projection onto the randomized rank-$r$ subspace {{as defined in Lemma \ref{lem:1} in the appendix}} , and let
\[
\mathcal{E}_{r,p}
:=\left\{
\|B-\widetilde B_r\|_2
\leq
\beta_{r,p}
\right\}
\]
denote the high-probability event associated with the randomized rank-$r$
approximation of $B=A^TA$, see Lemma \ref{lem:1} below.
We assume that, on the event $\mathcal{E}_{r,p}$, the contribution to the regularization bias from the directions omitted by the randomized approximation
is no larger than the contribution from the retained directions. That is,
for the iteration indices $k$ under consideration,
\[\left\|
G^k (I-\widetilde P_r)x^\dagger
\right\|_2
\leq
\eta_{r,k}\left\|
G^k \widetilde P_r x^\dagger
\right\|_2
\] for some $0\leq \eta_{r,k}\ll 1$ and $\eta_{r,k}\to 0$ as $r$ increases.
We assume that {{on the event $\mathcal E_{r,p}$}} the retained contribution is bounded by the
randomized bias approximation
$\left\|
G^k \widetilde P_r x^\dagger
\right\|_2
\leq
\widetilde b_{k,r},
$
where
$
\widetilde b_{k,r}
:=M \max_{1\leq i\leq r}\left|1-\omega \widetilde s_i^2\right|^k
\widetilde s_i^{2\nu}.
$
Consequently, on $\mathcal{E}_{r,p}$,
\[\|G^k x^\dagger\|_2
\leq
(1+\eta_{r,k})\,\widetilde b_{k,r}.
\]   
\end{assum}
% \seanc{$\widetilde{P}_r$ has not been defined. I presume $\widetilde{P}_r = \widetilde{V}_r \widetilde{V}_r^T$? Also, isn't the final assumption that the randomised bias is bounded automatically satisfied?}
% \ac{I think we should keep this as an assumption. Because the true bias is being computed using the true $G$ and not $\tilde{G}$. So the $\tilde{b}$ term is coming from the approximation but the LHS of that has essentially $x^\dagger=G^k(\widetilde{P}_r x^\dagger+(I-\widetilde{P}_r)x^\dagger)$. For our earlier argument with true $P$ and true $G$ (in truncated SVD subsection), the retained bias is upper bounded by $b_{k,r}$ necessarily, but I am not sure about whether that exact upper bound also holds when we have true $G$ but randomized $P_r$ projector. In particular what I want to say is that $G^k \widetilde{P}_r x^\dagger \neq \widetilde{G}^k \widetilde{P}_r x^\dagger\leq \widetilde{b}_{k,r}$. I think we can make a high probability statement, but for now I leave that as an assumption. Also explicitly stated where $P_r$ is coming from.}

Let, $K_{r,N}(\sigma):=\hat{K}$ be the stopping index from \eqref{eq:3.36}.
Let us define $\widehat{\gamma}^{\mathrm{up}}_{\hat K,r,j}
:=\widetilde{\gamma}_{\hat K,r,j}
+\frac{|\widehat d_{\hat K,r,j}|}{1-\varepsilon}.$ Then we will construct the practical pixelwise confidence intervals  by,
\begin{align}\label{eq:hatC}\widehat C_{\hat K,r,j}(Y)
:=\left\{
(x_\sigma^{\hat K})_j
\pm
\left[
z_{1-\alpha/2}
\sqrt{
\widehat{\gamma}^{\,\mathrm{up}}_{\hat K,r,j}
}
+(1+\eta_{r,k})\widetilde b_{\hat K,r}
\right]
\right\}.\end{align} We have the following coverage result for pixelwise confidence intervals constructed as such.
\begin{theorem}\label{pixelwiseUQ}
 Let the assumptions of Lemma \ref{thm:3.2}, Lemma \ref{lem:1} and Assumptions \ref{src_assum} and \ref{assum2} hold. Let $0\leq\alpha,\delta,\epsilon\leq 1$. We assume that the noise in the data is independent of the randomized SVD construction and the choice of random Rademacher vectors to estimate the diagonal of the residual matrix as above. Then for every pixel we have,
 \[\mathbb{P}_{x^\dagger}(x^\dagger_j\in \widehat C_{K,r,j}(Y))\geq (1-\alpha)(1-3e^{-p})(1-\delta)\]
\end{theorem}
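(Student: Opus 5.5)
We prove the bound by intersecting three events, each tied to one independent source of randomness, and multiplying their probabilities using the independence assumed in the statement. Write $\hat K=K_{r,N}(\sigma)$. Only the randomized SVD sketch and the Rademacher probes enter \eqref{eq:3.36}, together with $\sigma$ and $A$, and the data $Y$ does not. So $\hat K$ is measurable with respect to the algorithmic randomness and independent of the noise $\epsilon$. We first condition on the algorithmic randomness (the rSVD sketch and $w_1,\dots,w_N$). Conditionally, $\hat K$, $\widetilde b_{\hat K,r}$, $\widetilde\gamma_{\hat K,r,j}$ and $\widehat d_{\hat K,r,j}$ are deterministic. Then Theorem \ref{thm:3.1} still applies with $K=\hat K$, giving
\[
x^\dagger_j-(x^{\hat K}_\sigma)_j=(G^{\hat K}x^\dagger)_j+(Z_{\hat K})_j,\qquad (Z_{\hat K})_j\sim N\big(0,(\Gamma_{\hat K})_{jj}\big).
\]

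Next we set up the three good events. The first is $\mathcal E_{r,p}$ from Lemma \ref{lem:1}, which holds with probability at least $1-3e^{-p}$ over the rSVD sketch. The second is the diagonal-estimation event $\mathcal D_j:=\{|\widehat d_{\hat K,r,j}-d_{\hat K,r,j}|\le\varepsilon|d_{\hat K,r,j}|\}$. Conditionally on the sketch, this holds with probability at least $1-\delta$ by \cite[Corollary 5.4]{hallman2023} applied to the symmetric matrix $D_{\hat K,r}$, provided $N$ is large enough. The third is the Gaussian event $\mathcal G_j:=\{|(Z_{\hat K})_j|\le z_{1-\alpha/2}\sqrt{(\Gamma_{\hat K})_{jj}}\}$, which has conditional probability exactly $1-\alpha$.

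On $\mathcal E_{r,p}$, Assumption \ref{assum2} gives $|(G^{\hat K}x^\dagger)_j|\le\|G^{\hat K}x^\dagger\|_2\le(1+\eta_{r,\hat K})\widetilde b_{\hat K,r}$. On $\mathcal D_j$, the argument preceding \eqref{eq:3.35} gives $|d_{\hat K,r,j}|\le|\widehat d_{\hat K,r,j}|/(1-\varepsilon)$. Since $(\Gamma_{\hat K})_{jj}=\widetilde\gamma_{\hat K,r,j}+d_{\hat K,r,j}$, this yields $(\Gamma_{\hat K})_{jj}\le\widehat\gamma^{\mathrm{up}}_{\hat K,r,j}$. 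On the intersection of all three events, the triangle inequality therefore gives
\[
|x^\dagger_j-(x^{\hat K}_\sigma)_j|\le z_{1-\alpha/2}\sqrt{\widehat\gamma^{\mathrm{up}}_{\hat K,r,j}}+(1+\eta_{r,\hat K})\widetilde b_{\hat K,r},
\]
that is, $x^\dagger_j\in\widehat C_{\hat K,r,j}(Y)$.

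It remains to bound the probability of the intersection. Because $\epsilon$ is independent of the algorithmic randomness, we have $\mathbb P(\mathcal G_j\mid\text{sketch},\text{probes})=1-\alpha$ on every realization. We then need $\mathbb P(\mathcal D_j\mid\text{sketch})\ge1-\delta$ on $\mathcal E_{r,p}$. This uses that the probes are independent of the sketch and that $D_{\hat K,r}$ is determined by the sketch, so Hallman's bound applies conditionally. The tower property then gives
\[
\mathbb P(\mathcal E_{r,p}\cap\mathcal D_j\cap\mathcal G_j)\ge(1-3e^{-p})(1-\delta)(1-\alpha).
\]

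The main obstacle is this conditioning step. The index $\hat K$ is itself selected using the probes, so $\mathcal D_j$ concerns $D_{k,r}$ at a data-independent but probe-dependent index. The cleanest route I would try is to require the accuracy event for all candidate $k$ up to the stopping time, or to note that \eqref{eq:3.36} is computed offline, so that $\hat K$ is fixed before the confidence interval is formed. The multiplicative constant $(1-\delta)$ may need a union bound over the relevant $k$, or an interpretation in which the probes at index $\hat K$ are fresh. I would state this point explicitly. The remaining steps are routine.
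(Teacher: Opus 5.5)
Your argument follows the paper's: the Gaussian pixel event, the rSVD event $\mathcal E_{r,p}$ feeding Assumption~\ref{assum2}, the Monte Carlo event giving $(\Gamma_{\hat K})_{jj}\le\widehat\gamma^{\mathrm{up}}_{\hat K,r,j}$, the triangle inequality, and the product via independence. Your conditioning and tower-property bookkeeping is more careful than the paper's. It also avoids a slip in the paper, which writes $|d_{\hat K,r,j}|$ where $|\widehat d_{\hat K,r,j}|$ is needed.

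The obstacle you flag is real, and the paper's proof does not address it. The paper quotes \cite[Corollary 5.4]{hallman2023} at the index $\hat K$, but that corollary is for a fixed matrix, and $\hat K$ is computed from the same probes $w_1,\dots,w_N$ that produce $\widehat d_{\hat K,r,j}$. So your claim $\mathbb P(\mathcal D_j\mid\text{sketch})\ge 1-\delta$ is unjustified as written. The ``offline'' reading does not rescue it, because conditioning on the value of $\hat K$ changes the law of the probes. Since you leave this step open, the proposal does not yet establish the stated constant.

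You can close the gap without changing the constant:
\begin{itemize}
\item Fix the pixel $j$ and an a priori cap $K_{\max}$ on the stopping index (the experiments use $K_{\max}=1000$). Require Assumption~\ref{assum2} for all $k\le K_{\max}$.
\item Conditionally on the sketch, each $D_{k,r}$ is a fixed symmetric matrix. Hallman's bound therefore holds for each $k$ with failure probability $\delta/K_{\max}$ once $N\ge\max_{k\le K_{\max}}\frac{2}{\varepsilon^2}\frac{\|(D_{k,r})_{j:}\|_2^2-d_{k,r,j}^2}{d_{k,r,j}^2}\log(2K_{\max}/\delta)$.
\item A union bound over $k$ gives $\mathbb P\big(\bigcap_{k\le K_{\max}}\mathcal D_j^{(k)}\mid\text{sketch}\big)\ge1-\delta$. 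This intersection is contained in $\mathcal D_j^{(\hat K)}$ whatever index the probes select.
\item Your tower-property step then yields exactly $(1-\alpha)(1-3e^{-p})(1-\delta)$. The only cost is a $\log K_{\max}$ factor in ``sufficiently large $N$''.
\end{itemize}
Alternatively, compute $\widehat d_{\hat K,r,j}$ in the interval from a fresh, independent set of probes. Then $D_{\hat K,r}$ is fixed given the sketch and the first probe set, and Hallman's bound applies directly with $1-\delta$.
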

\begin{remark}
Note that Theorem \ref{pixelwiseUQ} says that there exist choices of $r,\,p$ and number of independent Rademacher probles, $N$, such that the lower bound in the coverage guarantee can be made as close to the desired coverage level given by $\alpha$. In practice, however, the constructed confidence intervals will almost always have coverage greater than the desired coverage, as is also evident from the simulations in Section \ref{sec:4}.
\end{remark}
\begin{proof}
    Recall that at the stopping index, $K_{r,N}(\sigma):=\hat{K}$ the reconstruction error can be decomposed as \[ x^\dagger - x_\sigma^{\hat{K}} = G^{\hat K} x^\dagger + Z_{\hat K}, \qquad Z_{\hat K} \sim N(0,\Gamma_{\hat K}). \] Therefore, for any fixed pixel $j\in \{1,\dots,n\}$, as in the proof of Theorem \ref{thm:3.3}, we have \[\mathbb{P}\left( |(Z_{\hat K})_j| \leq z_{1-\alpha/2}\sqrt{(\Gamma_{\hat K})_{jj}} \right) = 1-\alpha.
    \]
    Now on the randomized SVD event $\mathcal{E}_{r,p}$ defined in Lemma \ref{lem:1}, we have from Assumption \ref{assum2}, \[ |(G^Kx^\dagger)_j| \leq \|G^Kx^\dagger\|_2 \leq (1+\eta_{r,K})\widetilde b_{K,r}. \] Also for sufficiently large number of Rademacher probes, $N$, the error in the MC estimation of the covariance residual is given as
% \seanc{Maybe it should be ``estimation of the covariance residual" or something like that} 
\[
\mathbb P
\left(
\left|
\widehat d_{\hat{K},r,j}
-d_{\hat{K},r,j}
\right|
\leq
\epsilon
\left|
d_{\hat{K},r,j}
\right|
\right)
\geq
1-\delta.
\] Since, $(\Gamma_{\hat K})_{jj}=\gamma_{\hat K,j}
=\widetilde{\gamma}_{\hat K,r,j}+d_{\hat K,r,j},$ we get:
\[\gamma_{\hat K,j}
\leq\widetilde{\gamma}_{\hat K,r,j}+\frac{|d_{\hat K,r,j}|}{1-\epsilon}=\widehat{\gamma}^{\,\mathrm{up}}_{\hat K,r,j}.
\] Finally on the set $E_{r,p}$ as defined in Lemma \ref{lem:1} (see below \eqref{eq:App2}), we have, $\mathbb{P}_{\mathrm{rSVD}}(E_{r,p})\geq 1-3e^{-p}.$ Thus under the assumption that the randomized SVD construction and the estimation of the diagonal of residual matrix using Rademacher probes is independent of the noise in the observed data, we have  \[\mathbb{P}_{x^\dagger}(x^\dagger_j\in \widehat C_{\hat K,r,j}(Y))\geq (1-\alpha)(1-3e^{-p})(1-\delta)\]

\end{proof}

\section{Numerical Simulation Results}\label{sec:4}
We now illustrate the proposed probabilistic Landweber framework developed in section \ref{sec:3.6} on a two-dimensional computerized tomography problem. In particular, we will show the practical construction of the randomized covariance approximation, the bias-variance stopping rule, and the resulting pixelwise confidence intervals. We also demonstrate frequentist coverage of the proposed confidence intervals by repeated noisy-data experiments.
\subsection{Simulation set-up}
To begin our numerical experiments, we consider the standard Shepp-Logan phantom of size $128\times 128$ on a Cartesian grid. We note that the corresponding parameter vector $x^\dagger \in \mathbb{R}^{16384}$. The forward data is generated by applying MATLAB's inbuilt function `radon' to the phantom at $180$ equally spaced angles, which corresponds to the so-called full-data case. The resulting noise-free sinogram is vectorized and Gaussian noise is added to it. Thus the simulated forward data can be modeled as, \[Y = Ax^\dagger+\epsilon,
\quad\epsilon\sim N(0,\sigma^2 I),
\]
where we will loosely refer to $\sigma$ as the noise-level. Next we implement the Landweber method given by \eqref{eq:3.1}. The numerical implementation of the method uses the Radon transform and unfiltered back-projection in MATLAB to apply $A$ and $A^T$, respectively, so that the matrix $A^T A$ does not need to be explicitly constructed. The Landweber iteration is initialized with $x^{(0)}=0$. The relaxation parameter $\omega$ is chosen after estimating the largest eigenvalue, $\hat{s_i}_{\max}^2$, of the matrix, $A^TA$. Note that the largest eigenvalue of the normal operator $A^T A$ is again estimated by a matrix-free power iteration, using only repeated applications of the forward and adjoint operators. We set the maximum number of Landweber iterations in our method to $K_{\max}=1000.$  For the source condition as mentioned in section \ref{sec:src}, we use $\nu=1/2.$ To justify this choice, note that the Shepp-Logan phantom is in $H^{\alpha}(\mathbb{R}^2)$ for $\alpha < 1/2$, while the normal operator for the Radon transform $R^* R$ is a pseudodifferential operator of order $-1$. Thus, we expect the range of $(R^* R)^{1/2}$ on $L^2(\mathbb{R}^2)$ to be $H^{1/2}(\mathbb{R}^2)$. Because of this, we expect that the Shepp-Logan phantom satisfies the source condition for $\nu< 1/2$, although this is a heuristic rather than rigorous proof. Practically, we may as well use $\nu = 1/2$ rather than $\nu = 1/2-
\epsilon$ for small $\epsilon$.
We also mention, \cite[Proposition 8.10]{Engl_book}, where the choice of $\nu=1/2$ is shown to correspond to a range type condition .  %{\textcolor{blue} {This is because for Landweber iteration applied to such linear ill-posed problems, typically one assumes $0<\nu<1$, however certain discrepancy based stopping rules require $0<\nu\leq 1/2$, see e.g., \cite[section 1]{gerth_19}.}}

{Now we outline the construction of the practical pixelwise confidence region using Theorem \ref
{pixelwiseUQ}. To that end, after fixing a target rank, $r$ and an oversampling parameter $p=10$, randomized approximation to $B\approx \widetilde{B}_r$ is done according to \cite[Algorithm 5.3]{halko2011}. Note that in order to implement the stopping rule to find $K_{r,N}(\sigma):= \hat K$, we estimate $\widetilde{b}_{k,r}$ for every itertaion $k\leq \hat{K}$ from \eqref{eq:rsvd_bias}. In our numerical simulations, we used the fixed value of $M=6$. Next we compute the residual $\widehat d_{k,r,j}$ as per \eqref{residual}  using a Monte-Carlo estimator. We also fix $c_0=1$ and thereby find stopping index according to rule given by \eqref{eq:3.36}. We note that since the exact number of Rademacher probes $N$ needed to determine the diagonal elements of the residual are not exactly known, we choose $N$ large enough, so that $1-\epsilon \approx 1$. In simulations, we fixed $N=100$. We also assume that rank $r$ is chosen so that $1+\eta_{r,K}\approx 1$ as implied by Assumption \ref{assum2}. Once the stopping index $\hat K$ is found for a fixed noise level $\sigma$ and for chosen target rank $r$ with oversampling parameter $p=10$,  the Landweber method runs for $\hat K$ many iterations.  Subsequently, pixelwise confidence intervals are built according to \eqref{pixelwiseUQ}. Finally, we also asses the frequentist coverage of pixelwise confidence intervals. For these experiments, the true image $x^\dagger$ is fixed and independent realizations of the observational noise are generated according to
\[\varepsilon^{(t)}
\sim N(0,\sigma^2I),
\quad t=1,\ldots,N_{\mathrm{trial}}.
\]
Thus, for each trial, we have the observed data given by
\[
y^{(t)}=Ax^\dagger+\varepsilon^{(t)},
\]
and the Landweber reconstruction is computed using the same stopping index $\hat{K}$ determined in the offline stage.  For each pixel $j$, the empirical frequentist coverage is defined by
\[\widehat{\mathrm{Cov}}_j=
\frac{1}{N_{\mathrm{trial}}}
\sum_{t=1}^{N_{\mathrm{trial}}}
\mathbf 1
\left\{
x^\dagger_j
\in
 \widehat C_{\hat K,r,j}^{(t)}
\right\}.\]
}
\subsection{Simulation results and interpretation}
The results from the numerical simulations carried at different noise levels and for different randomized rank$-r$ approximations of the normal operator are tabulated in Table \ref{tab:1}. In figure \ref{fig:nine_figs}, we have given example reconstructions with randomized SVD rank chosen to be $r=900$ and we do reconstructions across three noise levels given by, $\sigma=0.1, 0.2$ and $0.3$. For each noise level, we plot the corresponding reconstruction, confidence intervals built as in  \eqref{eq:3.36} for pixels that fall on the horizontal line through the center of the phantom and a corresponding empirical coverage map for all the pixels across $100$ independent trials. In figure \ref{fig:six_figs}, we plot the reconstructions at a fixed noise level, $\sigma=0.2$ for two different choices of rank, $r=500$ and $r=1300$ respectively. The plotted results agree with the interpretation given below for the trends observed in Table \ref{tab:1}.
%%%%%%%%%%%%%%%%%%%%%%%%%%%%%%%%%%%%%%%%%%% Fig 1 %%%%%%%%%%%%%%%%%%
\begin{figure}[htbp]
    \centering

    % Row 1
    \begin{subfigure}[b]{0.30\textwidth}
        \centering
        \includegraphics[width=\textwidth]{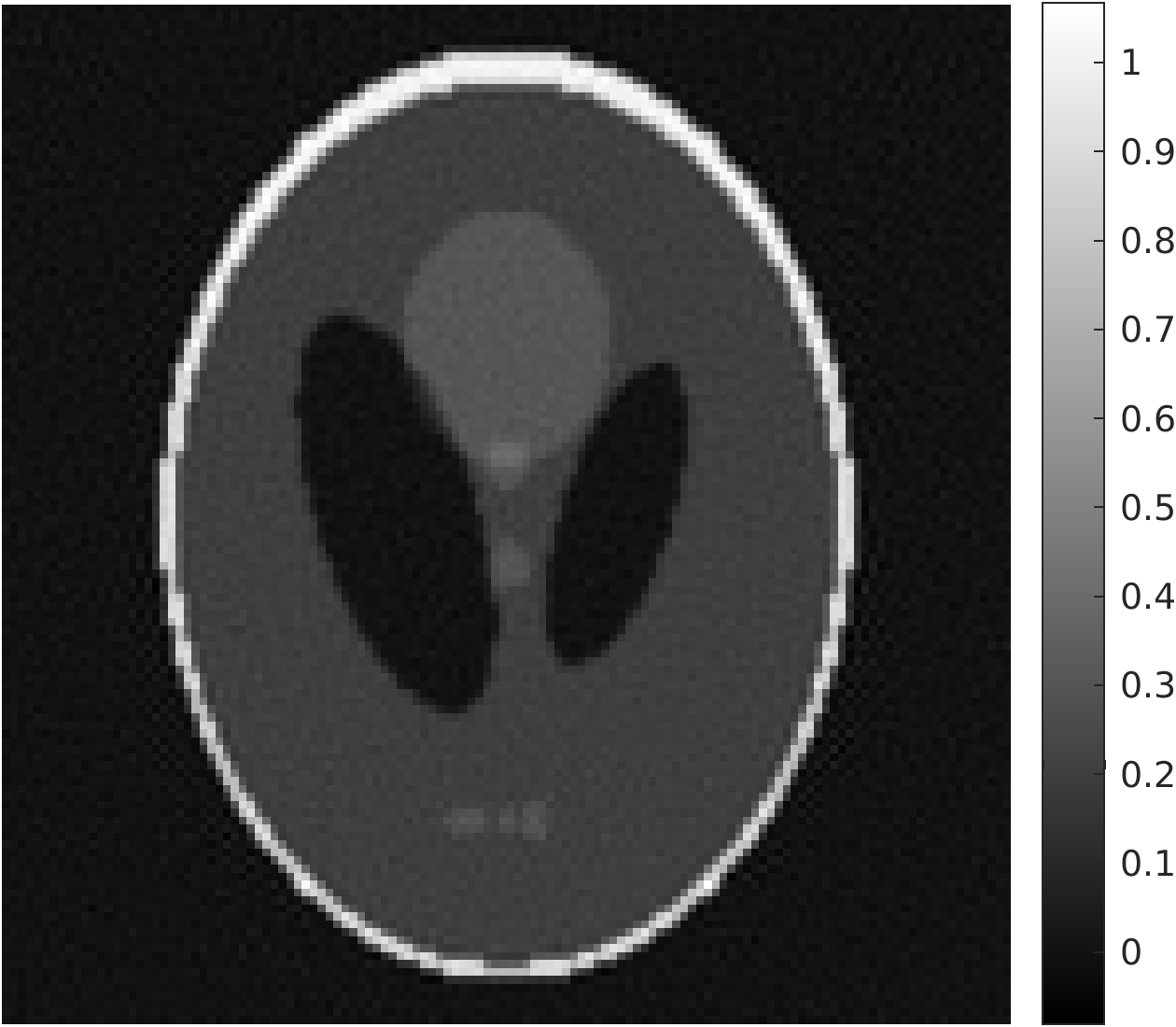}
        \caption{reconstruction at $\sigma=0.1$}
        \label{fig:1}
    \end{subfigure}
    \hfill
    \begin{subfigure}[b]{0.30\textwidth}
        \centering
        \includegraphics[width=\textwidth]{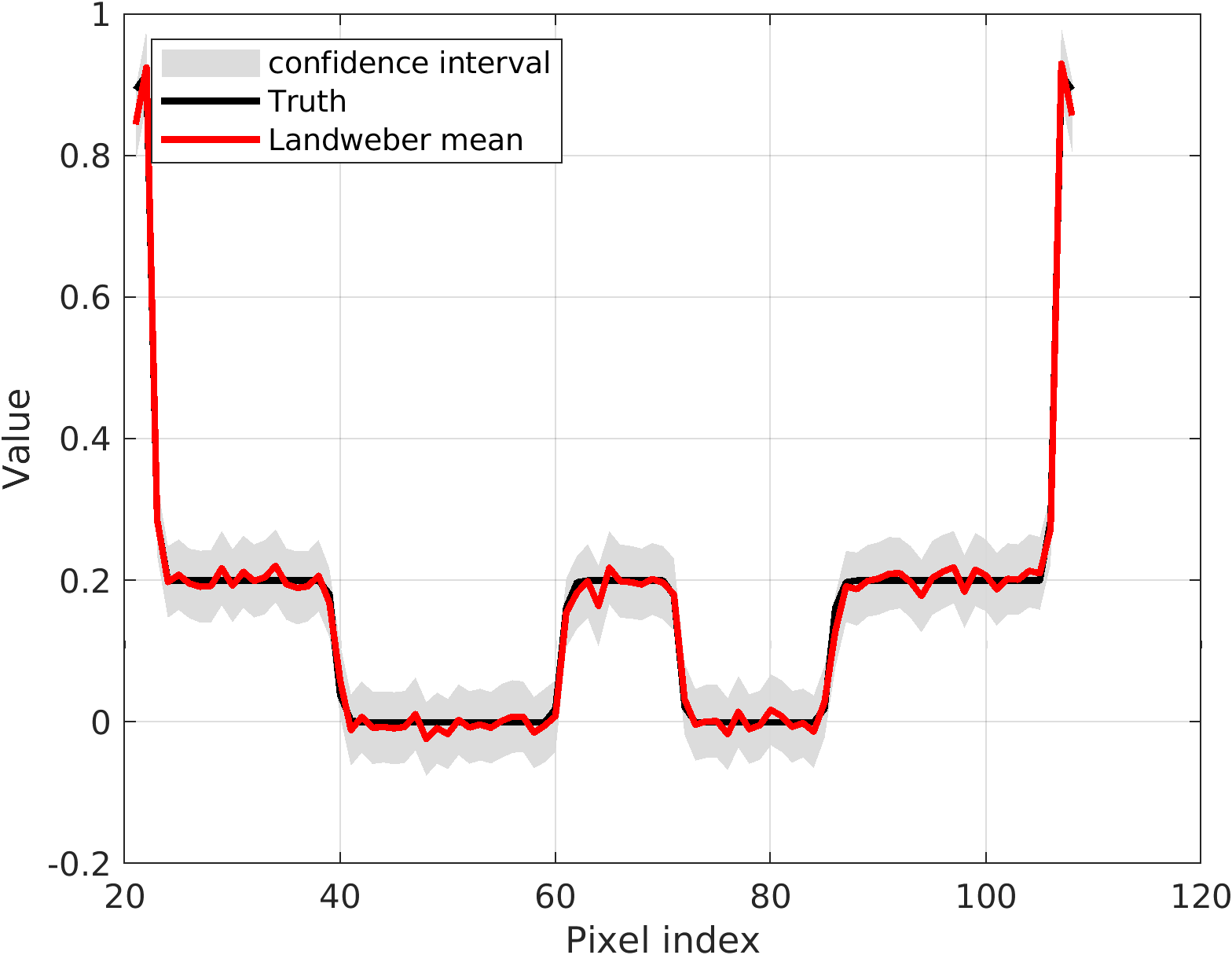}
        \caption{Confidence intervals along the horizontal center line, $\sigma=0.1$}
        \label{fig:2}
    \end{subfigure}
    \hfill
    \begin{subfigure}[b]{0.30\textwidth}
        \centering
        \includegraphics[width=\textwidth]{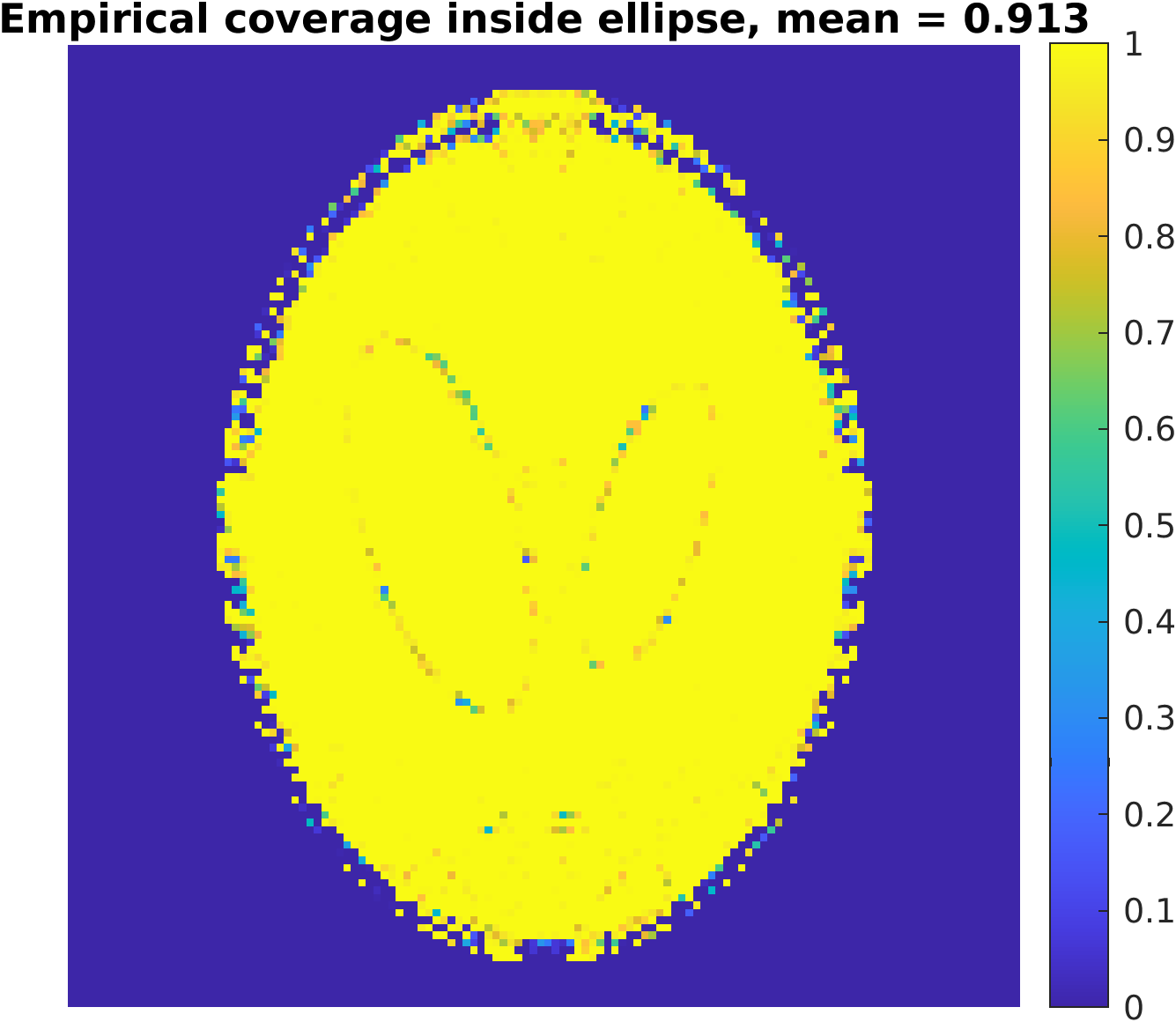}
        \caption{Empirical Coverage map across 100 trials, $\sigma=0.1$}
        \label{fig:3}
    \end{subfigure}

    \vspace{0.4cm}

    % Row 2
    \begin{subfigure}[b]{0.30\textwidth}
        \centering
        \includegraphics[width=\textwidth]{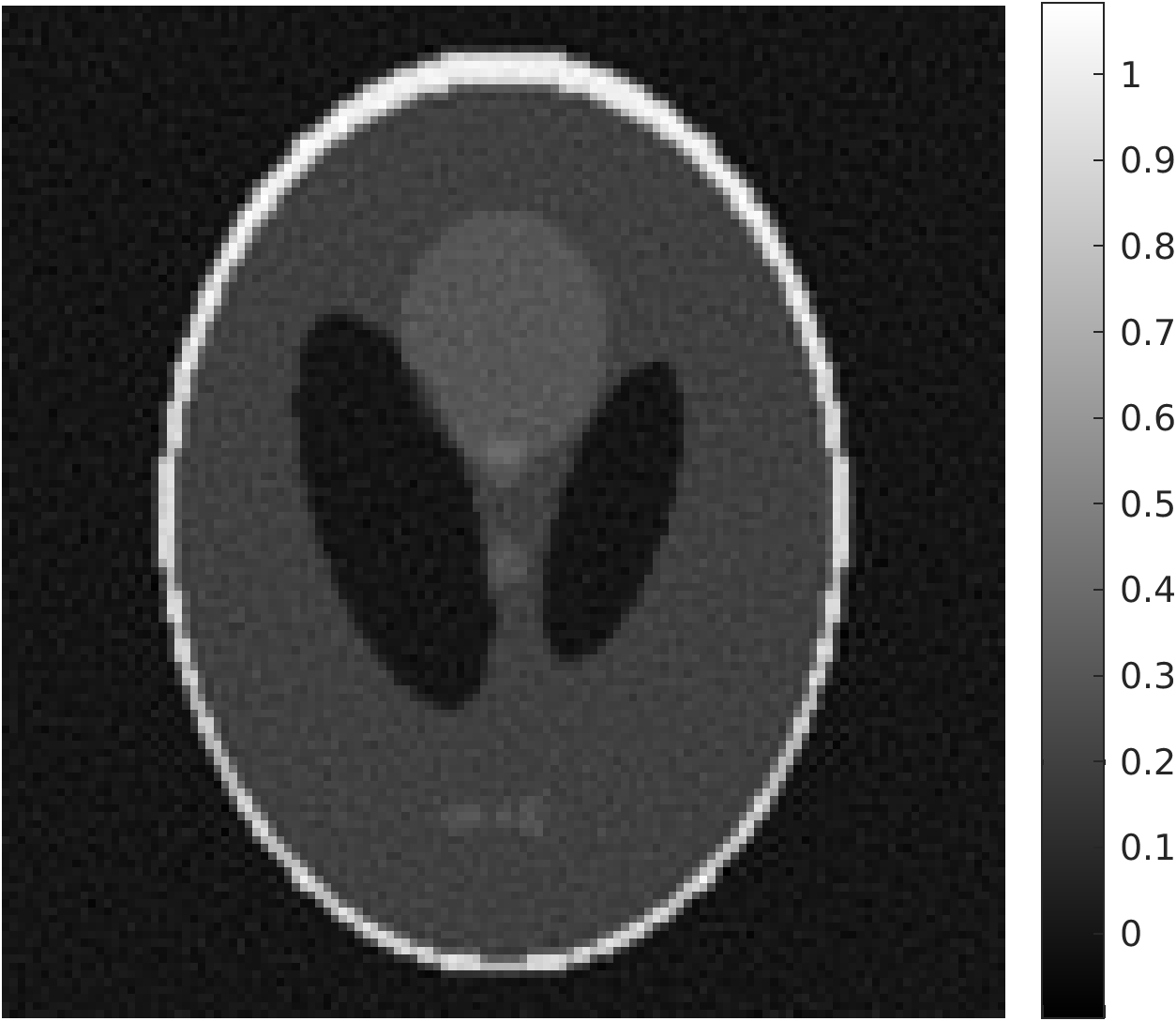}
        \caption{reconstruction at $\sigma=0.2$}
        \label{fig:4}
    \end{subfigure}
    \hfill
    \begin{subfigure}[b]{0.30\textwidth}
        \centering
        \includegraphics[width=\textwidth]{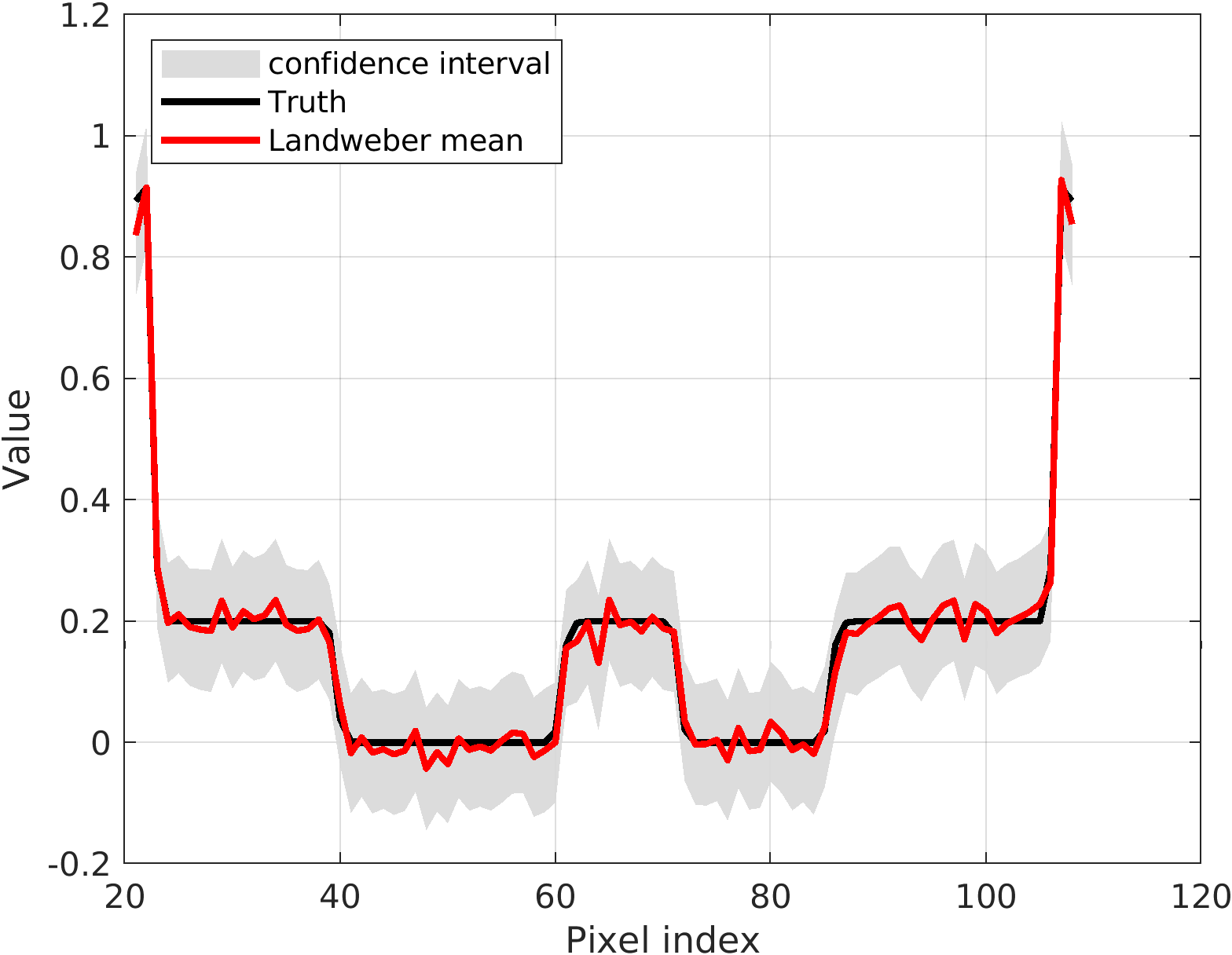}
        \caption{Confidence intervals along the horizontal center line, $\sigma=0.2$}
        \label{fig:5}
    \end{subfigure}
    \hfill
    \begin{subfigure}[b]{0.30\textwidth}
        \centering
        \includegraphics[width=\textwidth]{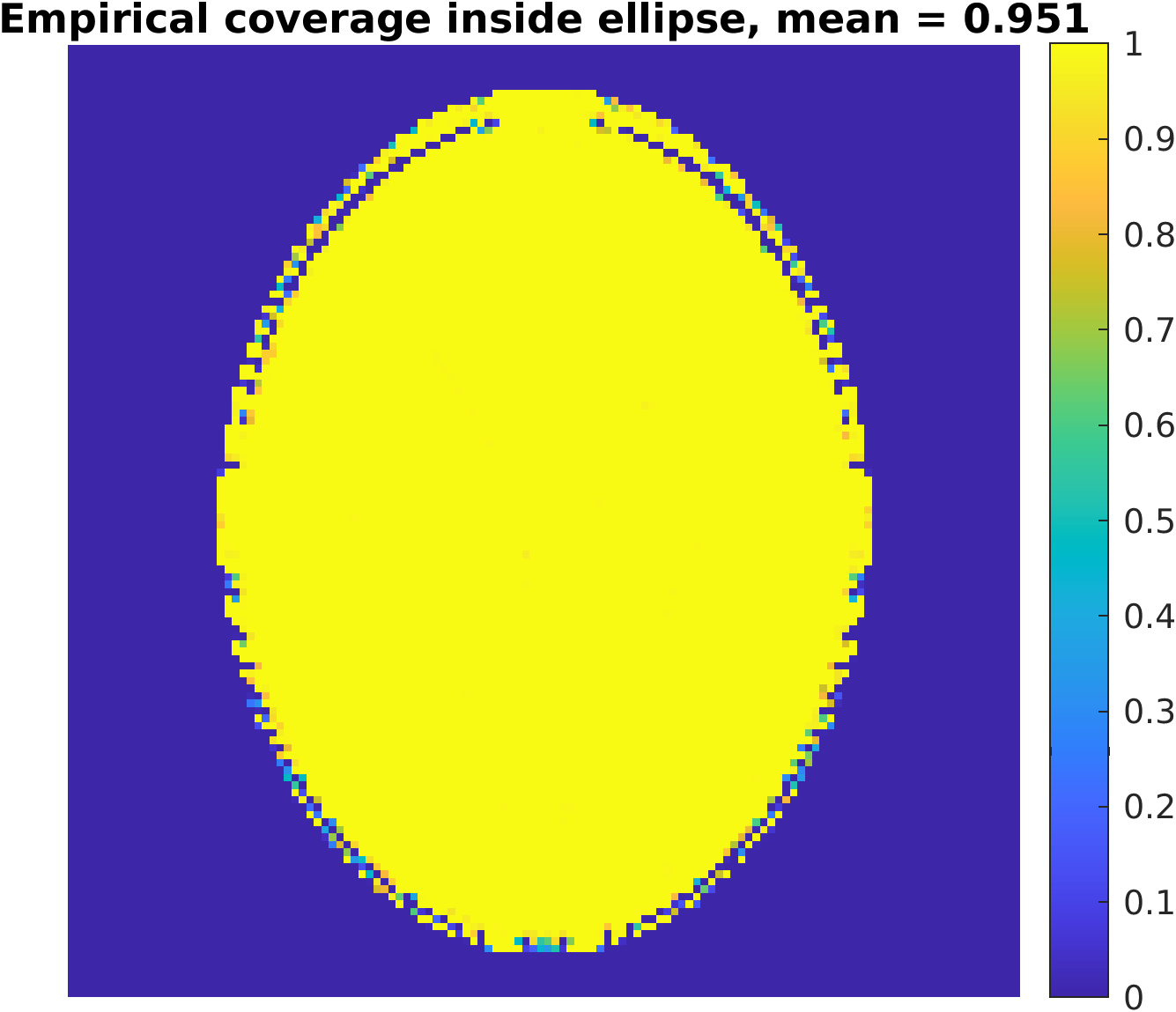}
        \caption{Empirical Coverage map across 100 trials, $\sigma=0.2$}
        \label{fig:6}
    \end{subfigure}

    \vspace{0.4cm}

    % Row 3
    \begin{subfigure}[b]{0.30\textwidth}
        \centering
        \includegraphics[width=\textwidth]{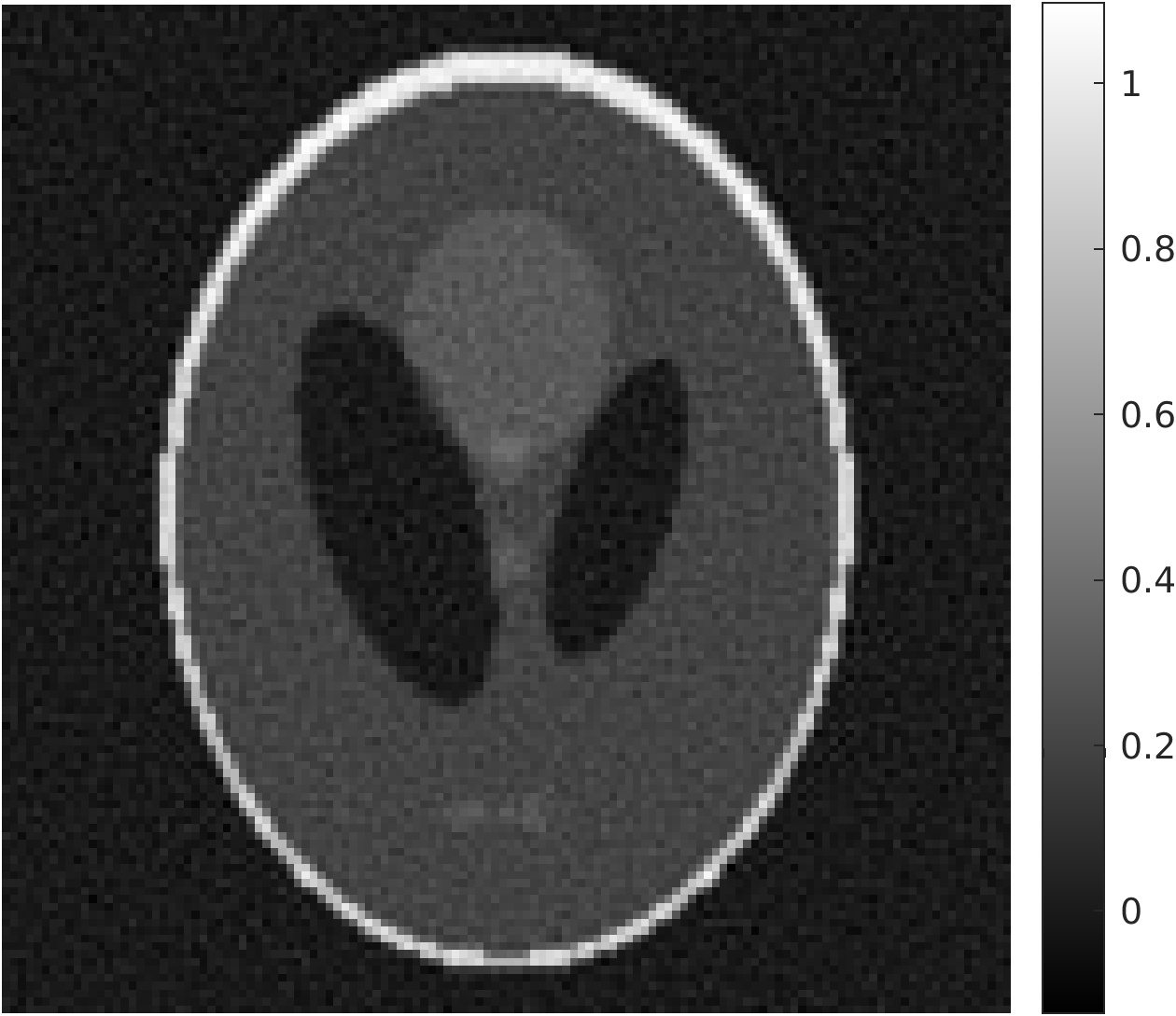}
        \caption{reconstruction at $\sigma=0.3$}
        \label{fig:7}
    \end{subfigure}
    \hfill
    \begin{subfigure}[b]{0.30\textwidth}
        \centering
        \includegraphics[width=\textwidth]{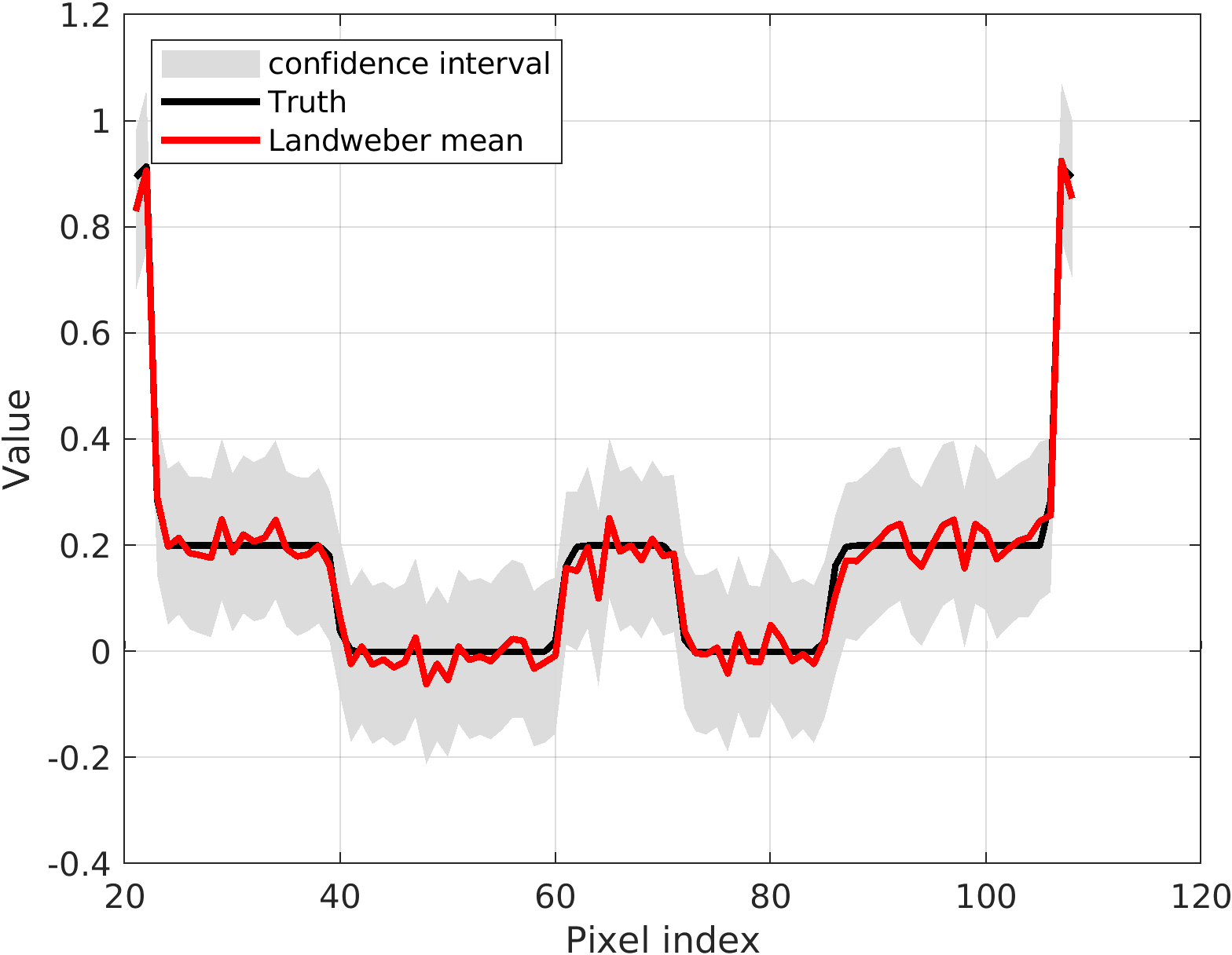}
        \caption{Confidence intervals along the horizontal center line, $\sigma=0.3$}
        \label{fig:8}
    \end{subfigure}
    \hfill
    \begin{subfigure}[b]{0.30\textwidth}
        \centering
        \includegraphics[width=\textwidth]{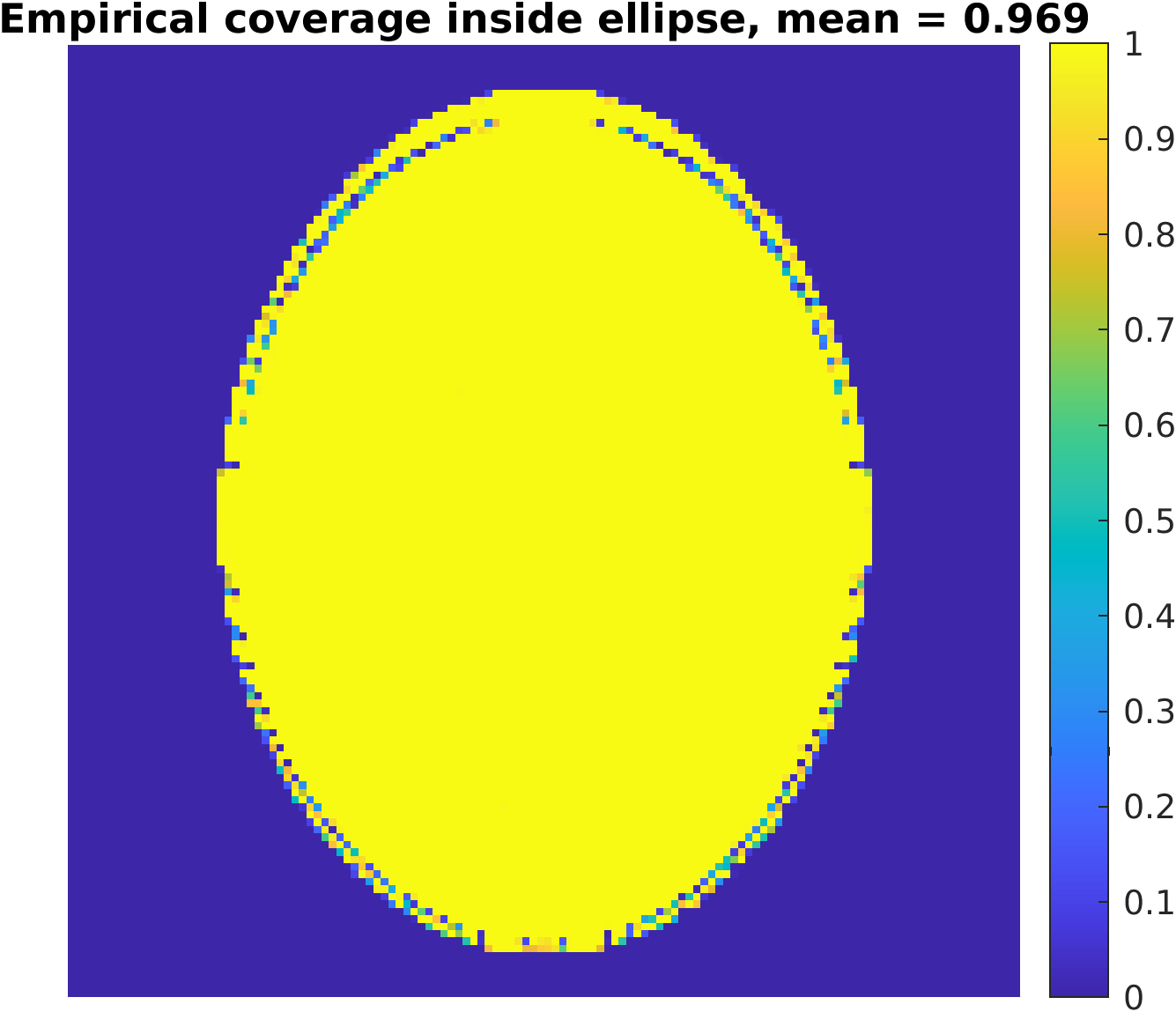}
        \caption{Empirical Coverage map across 100 trials, $\sigma=0.3$}
        \label{fig:9}
    \end{subfigure}

    \caption{Reconstructions and covergae maps for retained rank = $900$ for $\sigma=0.1,0.2,$ and $0.3$ respectively}
    \label{fig:nine_figs}
\end{figure}
%%%%%%%%%%%%%%%%%%%%%%%%%%%%%%%%%%%%%%%%%%%%% Fig 2 %%%%%%%%%%%%%%%%
\begin{figure}[htbp]
    \centering

    % Row 1
    \begin{subfigure}[b]{0.30\textwidth}
        \centering
        \includegraphics[width=\textwidth]
        {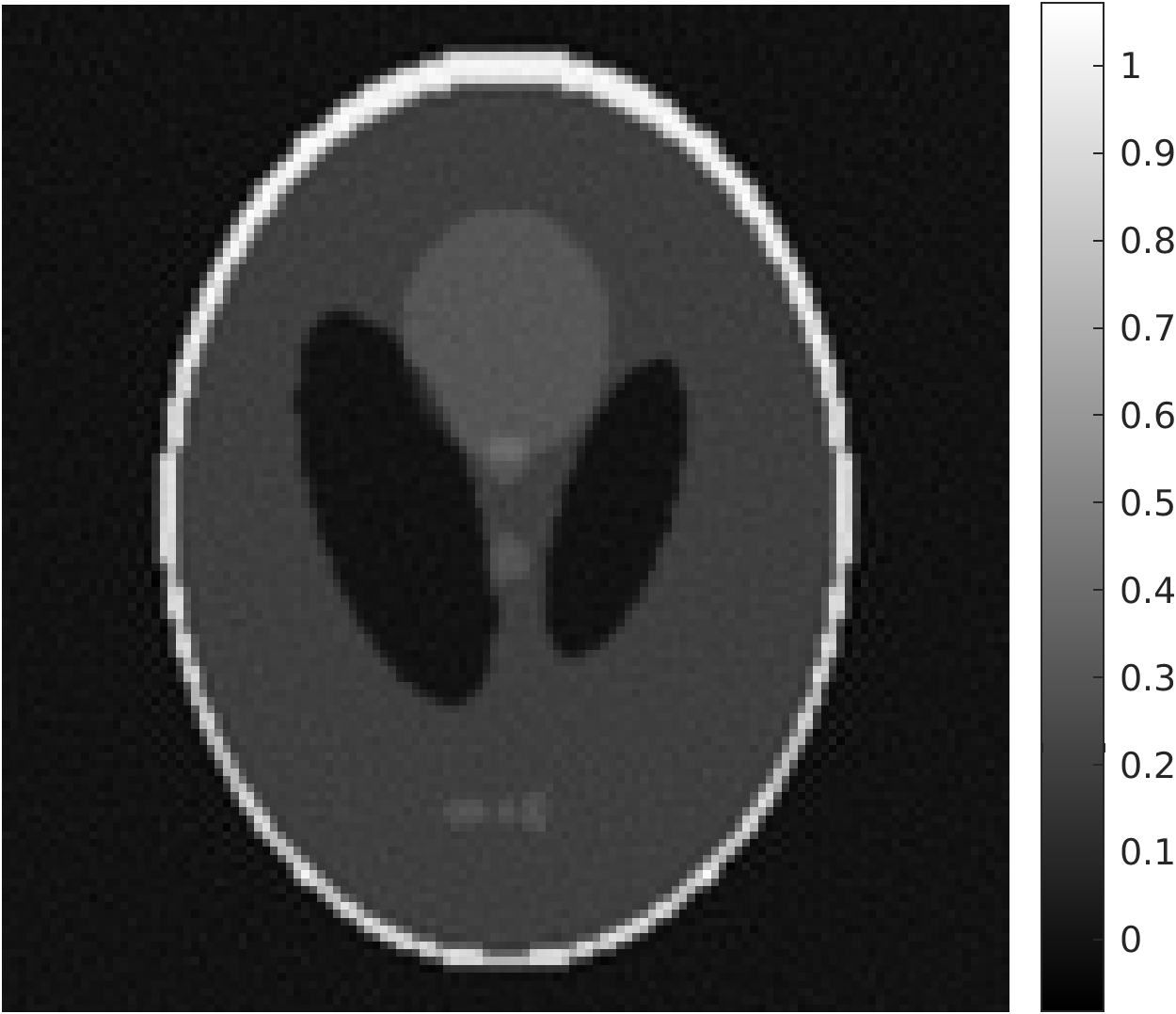}
        \caption{Reconstruction at rank $500$ for $\sigma=0.1$.}
        \label{fig:rank500_reconstruction}
    \end{subfigure}
    \hfill
    \begin{subfigure}[b]{0.30\textwidth}
        \centering
        \includegraphics[width=\textwidth]
        {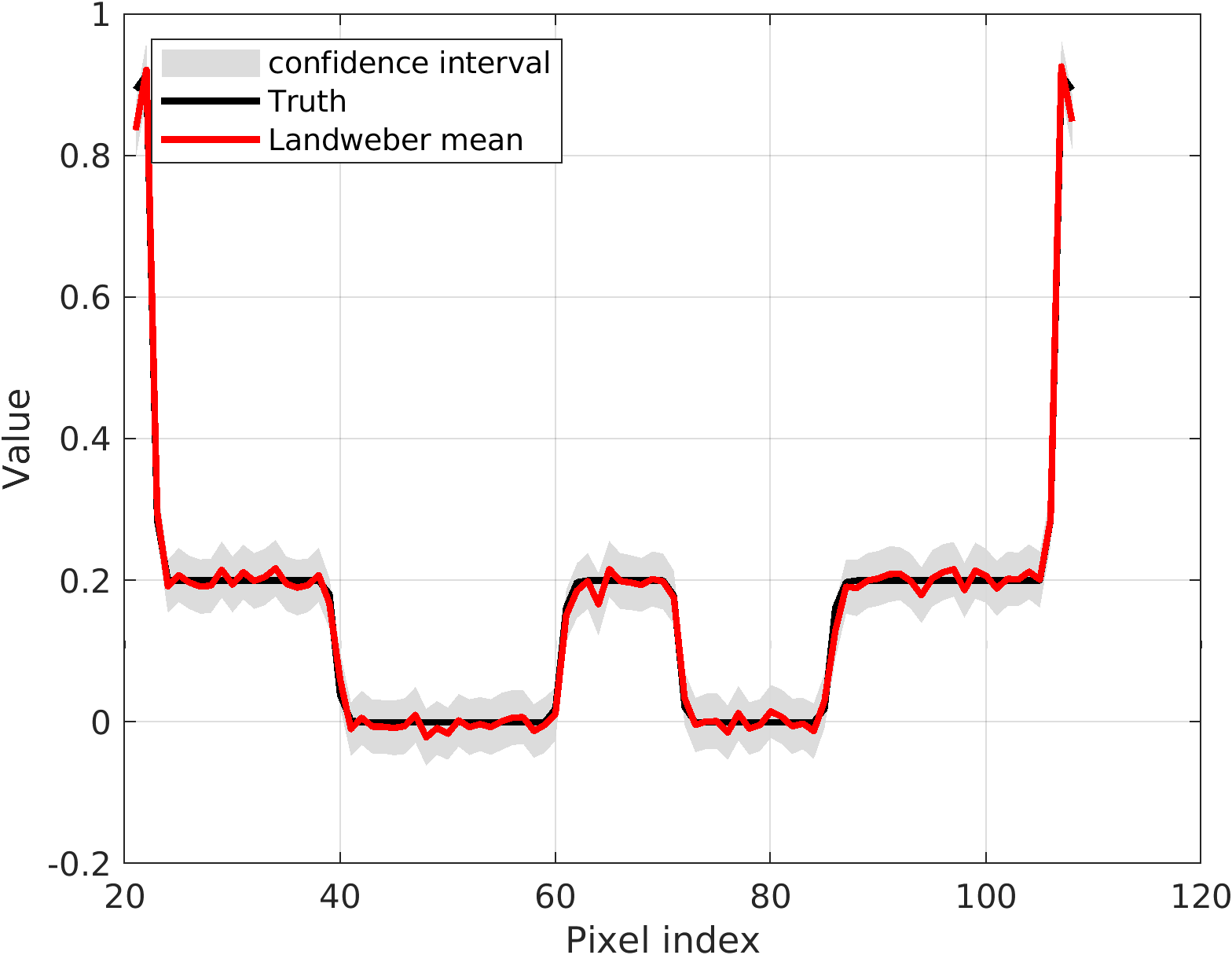}
        \caption{Confidence intervals along the horizontal center line for rank $500$ and $\sigma=0.1$.}
        \label{fig:rank500_ci}
    \end{subfigure}
    \hfill
    \begin{subfigure}[b]{0.30\textwidth}
        \centering
        \includegraphics[width=\textwidth]
        {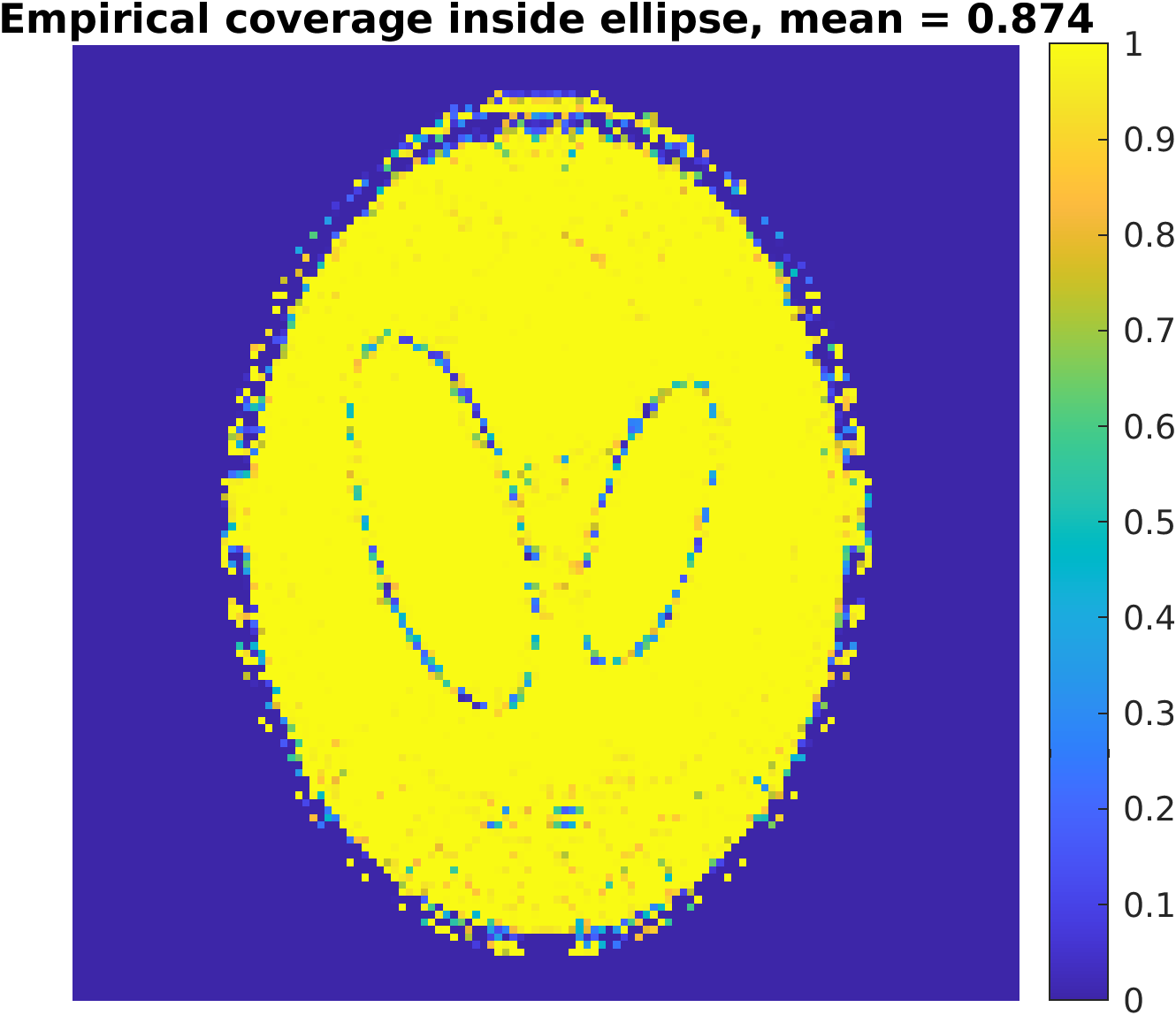}
        \caption{Empirical coverage map across 100 trials for rank $500$ and $\sigma=0.1$.}
        \label{fig:rank500_coverage}
    \end{subfigure}

    \vspace{0.4cm}

    % Row 2
    \begin{subfigure}[b]{0.30\textwidth}
        \centering
        \includegraphics[width=\textwidth]
        {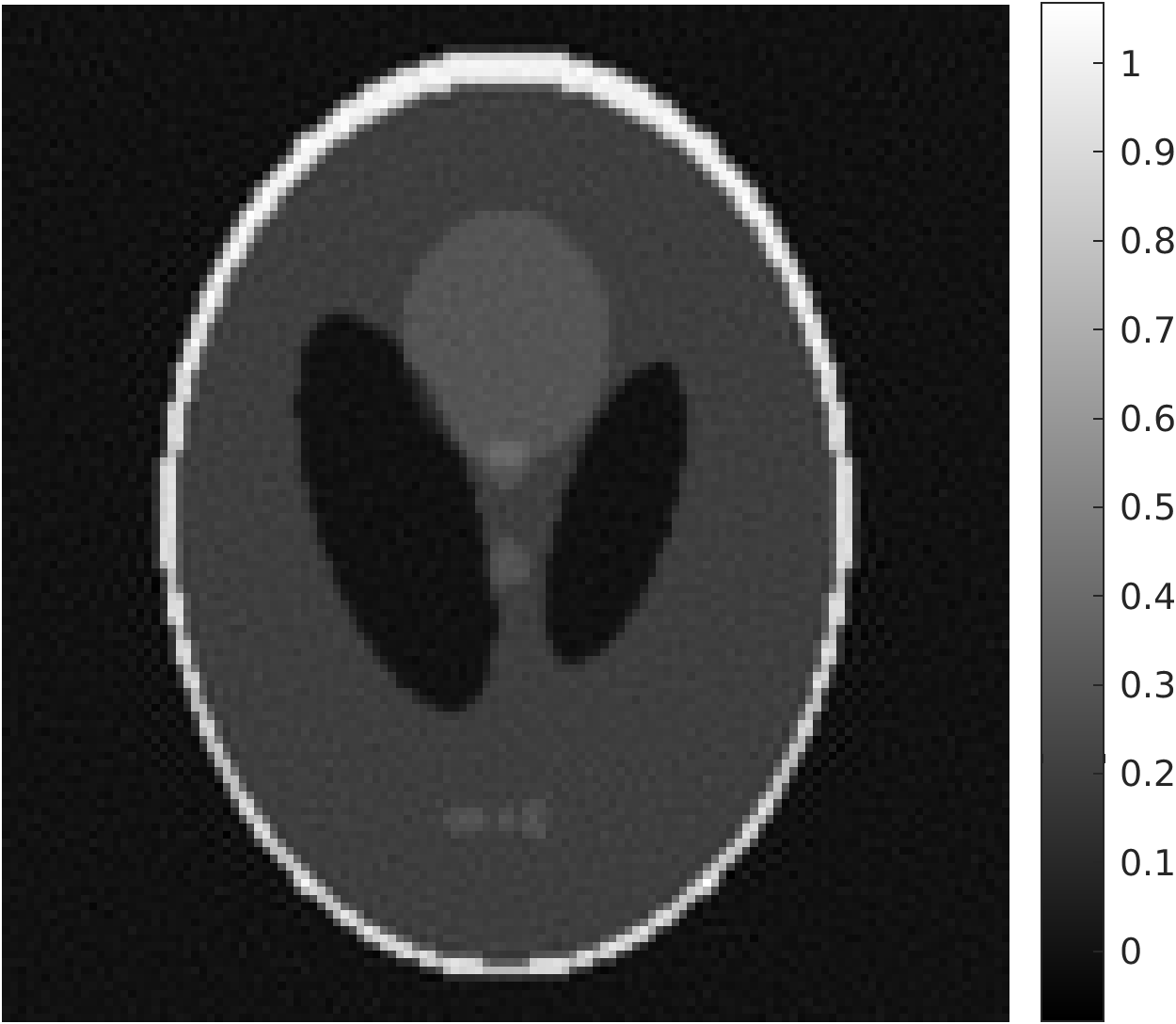}
        \caption{Reconstruction at rank $1300$ for $\sigma=0.1$.}
        \label{fig:rank1300_reconstruction}
    \end{subfigure}
    \hfill
    \begin{subfigure}[b]{0.30\textwidth}
        \centering
        \includegraphics[width=\textwidth]
        {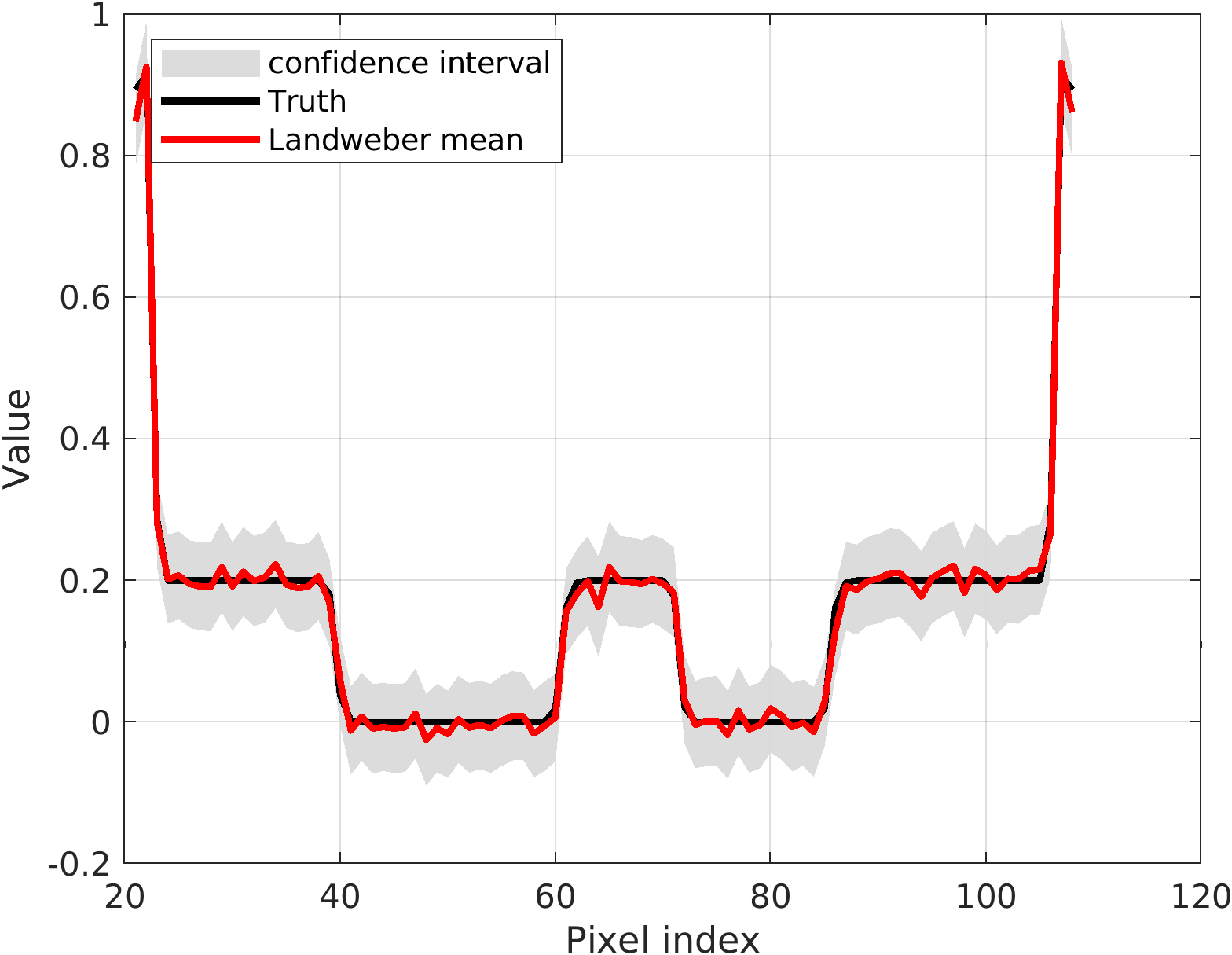}
        \caption{Confidence intervals along the horizontal center line for rank $1300$ and $\sigma=0.1$.}
        \label{fig:rank1300_ci}
    \end{subfigure}
    \hfill
    \begin{subfigure}[b]{0.30\textwidth}
        \centering
        \includegraphics[width=\textwidth]
        {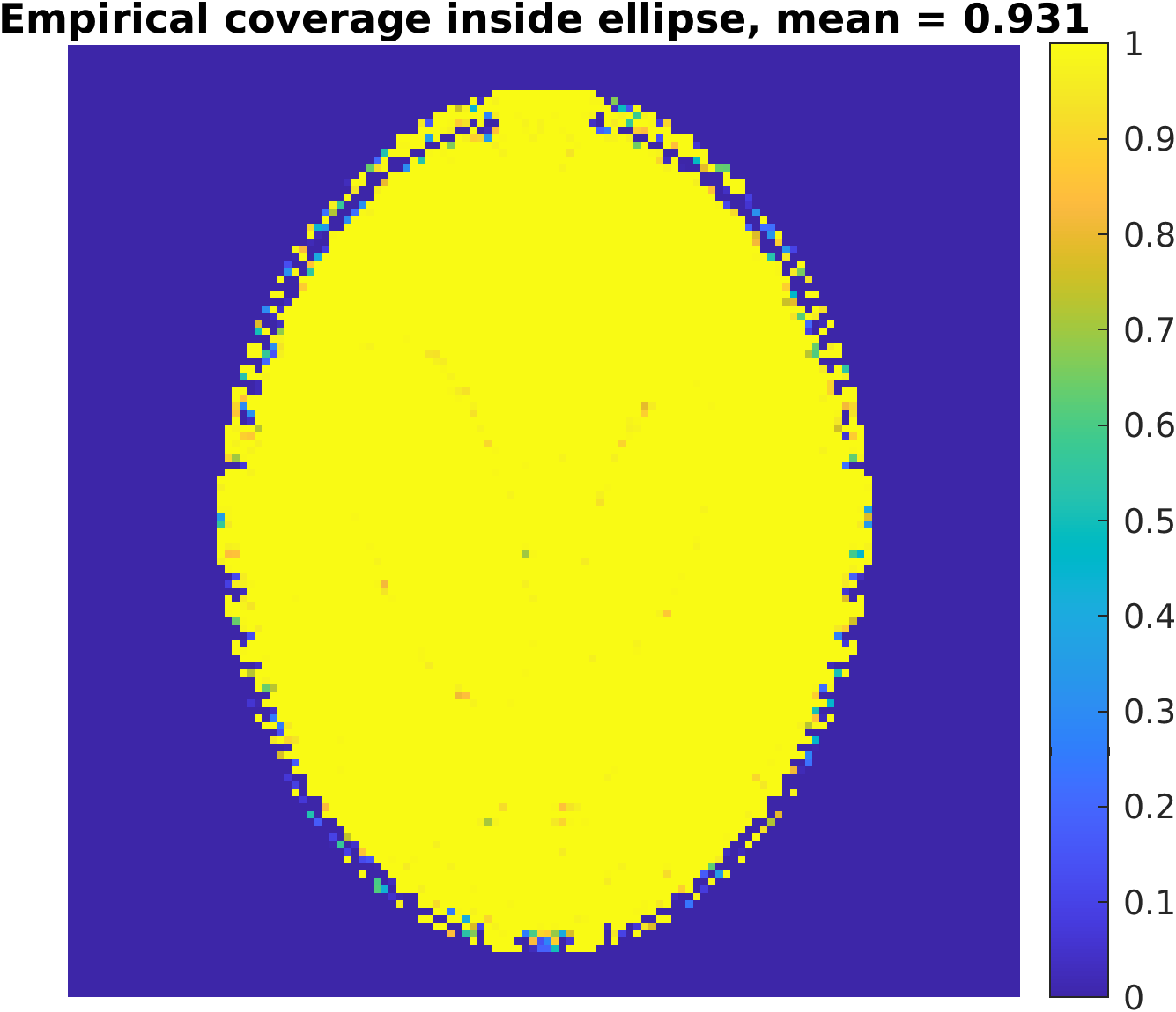}
        \caption{Empirical coverage map across 100 trials for rank $1300$ and $\sigma=0.1$.}
        \label{fig:rank1300_coverage}
    \end{subfigure}

    \caption{Reconstructions, confidence intervals, and empirical coverage maps
    for retained ranks $500$ and $1300$ with $\sigma=0.1$.}
    \label{fig:six_figs}
\end{figure}

%%%%%%%%%%%%%%%%%%%%%%%%%%%%%%%%%%%%%%%%%% Table %%%%%%%%%%%%%%%%%%%
\begin{table}[htbp]
\centering
\caption{Empirical coverage results for different randomized SVD ranks and noise levels. 
The desired nominal coverage level is 0.95. Interior coverage is computed inside the phantom, 
while the one-dimensional coverage is computed along the horizontal center (row 64)  of the phantom. The stopping index is obtained by balancing bias and variance as in \eqref{eq:3.36}.}
\label{tab:1}
\begin{tabular}{c c c c c}
\hline
\textbf{Rank} & $\boldsymbol{\sigma}$ & \textbf{Stopping Index $\hat K$} 
& \textbf{Mean Interior Coverage} 
& \textbf{Mean Coverage (Row 64)} \\
\hline

\multirow{3}{*}{500}
& 0.1 & 304 & 0.8740 & 0.9255 \\
& 0.2 & 273 & 0.9316 & 0.9653 \\
& 0.3 & 254 & 0.9507 & 0.9871 \\
\hline

\multirow{3}{*}{700}
& 0.1 & 350 & 0.8968 & 0.9381 \\
& 0.2 & 312 & 0.9432 & 0.9798 \\
& 0.3 & 290 & 0.9599 & 0.9972 \\
\hline

\multirow{3}{*}{900}
& 0.1 & 393 & 0.9134 & 0.9462 \\
& 0.2 & 349 & 0.9512 & 0.9918 \\
& 0.3 & 323 & 0.9693 & 0.9995 \\
\hline

\multirow{3}{*}{1100}
& 0.1 & 432 & 0.9236 & 0.9530 \\
& 0.2 & 381 & 0.9579 & 0.9977 \\
& 0.3 & 352 & 0.9790 & 0.9999 \\
\hline

 \multirow{3}{*}{1300}
 & 0.1 & 467 & 0.9311 & 0.9625 \\
 & 0.2 & 411 & 0.9644 & 0.9995 \\
 & 0.3 & 379 & 0.9872 & 0.9999 \\
\hline

\end{tabular}
\end{table}
Indeed, Table \ref{tab:1} shows {three} clear trends. First, for a fixed noise level $\sigma$, increasing the randomized SVD rank generally increases empirical coverage. This is particularly noticeable at the lowest noise level, $\sigma=0.1$, where the mean interior coverage increases from $0.8740$ at rank $500$ to $0.9311$ at rank $1300$. This suggests that, when the observational noise is small, truncation of the singular spectrum becomes an important source of error: lower-rank approximations omit components that are sufficiently important relative to the noise level, leading to undercoverage.

Second, for a fixed rank, empirical coverage increases with the noise level. For example, at rank 700, the mean interior coverage rises from $0.8968$ for $\sigma=0.1$ to $0.9432$ for $\sigma=0.2$ and $0.9599$ for $\sigma=0.3$. At higher noise levels, however, increasing the retained rank also introduces progressively smaller singular values into the covariance approximation. As discussed in section \ref{sec:3.6}, these directions can make disproportionately large contributions to the propagated variance, with the contribution behaving approximately like $\sigma^2/s_i^2$ for sufficiently large iteration index. Consequently, once the dominant spectral components have been adequately captured, increasing the rank further may primarily inflate the confidence regions, leading to increasingly conservative coverage.

{{Third, for a fixed rank, as noise increases the stopping rule causes the iterations to stop earlier. Besides, for the same noise level increasing the rank allows the iterations to go on further. This behavior is consistent with the interpretation of both early stopping and spectral truncation as complementary forms of regularization. When the noise level is larger, stronger regularization through earlier stopping is required to limit the effect of noise. On the other hand, a smaller retained rank already provides stronger spectral regularization and hence causes an earlier stopping point. As the retained rank increases and the spectral truncation becomes less severe, the stopping rule compensates by allowing more Landweber iterations.}}

Overall, the results indicate a trade-off between the randomized SVD rank and the nominal coverage at a given observational noise level. If the rank is too small, important spectral directions are omitted and the resulting confidence intervals may undercover, particularly at low noise levels. Conversely, at larger noise levels, retaining too many small-singular-value directions can inflate the propagated uncertainty and produce overcoverage. Thus, the appropriate randomized SVD rank should depend on the observational noise level: lower noise requires a sufficiently large rank to control truncation-induced undercoverage, whereas at higher noise levels a smaller effective rank may already provide coverage close to the nominal $0.95$ level. This suggests the need to develop an adaptive randomized SVD approach to suitably choose a rank for a given noise level. We leave this to be addressed in a future work.
\subsection*{Acknowledgment} The authors began this research as a part of a focused research group at BIRS, Canada in August 2025 and completed it while being a part of Collaborate @ ICERM workshop in USA in August 2026. We thank both BIRS and ICERM for their hospitality and for providing pleasant, productive working conditions. S Holman was supported by the Engineering and Physical Sciences Research Council (EPSRC) through grant EP/V007742/1. AI tools have been used for improving and refactoring the MATLAB codes used in this project. 
\section{Appendix }\label{sec:app}
\subsection{Covariance matrix from SVD}
{The lemma below computes the covariance matrix $\widetilde\Gamma_{K,r}$ resulting from a randomized SVD approximation to the matrix $A^TA:=B$ in Theorem \ref{thm:3.1}.} 
\begin{lemma}\label{lem:1}
 Let $B=A^TA,\ 
G=I-\omega B$ and let $\widetilde B_r:=\widetilde{V_r}\widetilde{\Lambda}_r\widetilde{V}_r^T$ be the randomized rank $r-$ SVD approx to $B$ obtained by using \cite[Algorithm 5.3]{halko2011} with $r-$ as the traget rank and $p$ as an oversampling parameter. We also define $\widetilde G_r:=I-\omega\widetilde B_r$. Let the step size $\omega$ be chosen such that, $\|G\|\leq\rho<1$ and $\|\widetilde G_r\|\le1.$ For notational convenience we denote $S(G):=\sum_{k=0}^{K-1}G^k 
$ and similarly for $S(\widetilde{G}_r)$. We note that the matrices, $G,\, \widetilde{G}_r$ commute respectively with $B,$ and $\widetilde{B}_r$. Then we can write the covariance matrix from Theorem \ref{thm:3.1} and its approximation resulting from  truncated randomized SVD approximation to $B$ respectively as, \begin{align}
    \Gamma_K&=
\sigma^2\omega^2 S(G)\,B\, S(G)=\sigma^2\omega^2B(S(G))^2,\\
\widetilde\Gamma_{K,r}
&=\sigma^2\omega^2
S(\widetilde G_r)\,\widetilde B_r\,S(\widetilde G_r)=\sigma^2\omega^2\widetilde{B}_r(S(\widetilde G_r))^2.
\end{align}
Then $\mathbb{P}_{\mathrm{rSVD}}(\lVert\Gamma_K-\widetilde{\Gamma}_{K,r}\rVert\leq \Delta_{K,r,p})\geq(1-3e^{-p})$ for any $p\geq 4.$ Here, $\Delta_{K,r,p}$ is defined below.
\end{lemma}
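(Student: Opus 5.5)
Since $G$ is a polynomial in $B$ and $S(G)$ is a polynomial in $G$, both commute with $B$, so $\Gamma_K=\sigma^2\omega^2 B\,S(G)^2$; likewise $\widetilde\Gamma_{K,r}=\sigma^2\omega^2\widetilde B_r S(\widetilde G_r)^2$. I would bound the difference through the spectral functions $\phi_K(\lambda):=\omega^2\lambda\big(\sum_{k=0}^{K-1}(1-\omega\lambda)^k\big)^2=(1-(1-\omega\lambda)^K)^2/\lambda$, extended by $\phi_K(0)=\omega^2K^2$, which is a polynomial in $\lambda$ of degree $2K-1$. Then $\Gamma_K=\sigma^2\phi_K(B)$ and $\widetilde\Gamma_{K,r}=\sigma^2\phi_K(\widetilde B_r)$, and everything reduces to a perturbation bound for a matrix function.

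The first step is to invoke the rSVD error bound of \cite{halko2011} for Algorithm 5.3. For target rank $r$ and oversampling $p\ge4$, with probability at least $1-3e^{-p}$ (say \cite[Cor.~10.9]{halko2011} combined with the postprocessing bound for the Hermitian eigendecomposition, which at most doubles the error),
\[
\|B-\widetilde B_r\|\le\beta_{r,p}:=2\Big[\Big(1+9\sqrt{r+p}\sqrt{\min(m,n)}\Big)s_{r+1}^2\Big],
\]
or with the sharper constants from that corollary. This defines the event $\mathcal E_{r,p}$ used in Assumption \ref{assum2}.

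The second step is a deterministic perturbation estimate on $\mathcal E_{r,p}$. I would telescope the polynomial: write $S(G)=\sum_k G^k$ and use $G^k-\widetilde G_r^k=\sum_{i=0}^{k-1}G^{i}(G-\widetilde G_r)\widetilde G_r^{k-1-i}$ with $G-\widetilde G_r=-\omega(B-\widetilde B_r)$. Since $\|G\|,\|\widetilde G_r\|\le1$, this gives $\|G^k-\widetilde G_r^k\|\le k\omega\beta_{r,p}$, and hence $\|S(G)-S(\widetilde G_r)\|\le\omega\beta_{r,p}K(K-1)/2\le\omega\beta_{r,p}K^2/2$, while $\|S(G)\|,\|S(\widetilde G_r)\|\le K$. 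Splitting
\[
BS(G)^2-\widetilde B_rS(\widetilde G_r)^2=(B-\widetilde B_r)S(G)^2+\widetilde B_r\big(S(G)-S(\widetilde G_r)\big)S(G)+\widetilde B_rS(\widetilde G_r)\big(S(G)-S(\widetilde G_r)\big)
\]
and using $\|\widetilde B_r\|\le\|B\|\le 1/\omega$ gives
\[
\|\Gamma_K-\widetilde\Gamma_{K,r}\|\le\sigma^2\omega^2\beta_{r,p}\big(K^2+K^3\big)=:\Delta_{K,r,p}.
\]
The bound $\|\widetilde B_r\|\le\|B\|$ holds because $\widetilde B_r=QQ^TBQQ^T$ (with eigen-postprocessing). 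A sharper alternative is to use $\|S(G)\|\le 1/(1-\rho)$, which replaces the $K$ powers by $(1-\rho)^{-1}$ powers; I would state $\Delta$ as the minimum of the two versions.

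The main obstacle is to get the rSVD probability statement in exactly the form $1-3e^{-p}$ with an operator-norm bound for the symmetric eigendecomposition output of Algorithm 5.3, rather than for $\|(I-QQ^T)B\|$. I would handle this via $\|B-QQ^TBQQ^T\|\le2\|B-QQ^TB\|$, then apply the Halko et al.\ deviation bound with $t=e$, $u=\sqrt{2p}$, which yields exactly $1-3e^{-p}$. The rest is deterministic, so the probability bound transfers directly.
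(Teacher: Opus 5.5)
Your overall architecture matches the paper's: a deterministic bound on $\|\Gamma_K-\widetilde\Gamma_{K,r}\|$ in terms of $\|B-\widetilde B_r\|$, followed by Corollary 10.9 of Halko et al. (with $t=e$, $u=\sqrt{2p}$, giving $1-3e^{-p}$) and $\|B-PBP\|\le 2\|(I-P)B\|$. The probabilistic part is essentially identical; the deterministic step is where you differ. The paper never splits a product. It uses $\omega B=I-G$ and $(I-G)S(G)=I-G^K$ to linearize:
\[
\Gamma_K=\sigma^2\omega\,(I-G^K)S(G)=\sigma^2\omega\Big(\sum_{k=0}^{K-1}G^k-\sum_{k=K}^{2K-1}G^k\Big),
\]
and the same holds for $\widetilde\Gamma_{K,r}$. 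The difference is then a signed sum of $G^k-\widetilde G_r^k$, $1\le k\le 2K-1$, and telescoping with $\|G\|\le\rho$, $\|\widetilde G_r\|\le 1$ bounds each term by $\omega\|B-\widetilde B_r\|(1-\rho^k)/(1-\rho)$. This yields $\Delta_{K,r,p}=\sigma^2\omega^2H_{K,\rho}\beta_{r,p}$ with $H_{K,\rho}\le\min\{(2K-1)/(1-\rho),\,K(2K-1)\}$. It needs no bound on $\|\widetilde B_r\|$, and it grows linearly (at worst quadratically) in $K$ rather than like $K^3$. Your polynomial $\phi_K$ is exactly this identity once expanded in powers of $1-\omega\lambda$; you set it up but did not use it. Your three-term splitting is a valid generic alternative, just considerably cruder.

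There are three slips you should repair.

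\textbf{The bound on $\|B\|$.} $\|B\|\le 1/\omega$ is not a hypothesis. $\|G\|\le\rho$ only gives $\omega\|\widetilde B_r\|\le\omega\|B\|\le 1+\rho$ (and $\|\widetilde G_r\|\le 1$ gives $\omega\|\widetilde B_r\|\le 2$). Your chain therefore actually yields $\sigma^2\omega^2\beta_{r,p}\big(K^2+(1+\rho)K^3\big)$, so your stated $\Delta_{K,r,p}$ must be enlarged.

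\textbf{The ``sharper alternative''.} It cannot remove the powers of $K$. Since $\widetilde B_r$ has rank $r<n$, $\widetilde G_r$ has eigenvalue $1$ and $\|S(\widetilde G_r)\|=K$ exactly. Testing on $\ker\widetilde B_r$ even gives $\|S(G)-S(\widetilde G_r)\|\ge K-(1-\rho)^{-1}$, so only the factors $S(G)$ improve.

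\textbf{The rank-$r$ truncation.} Algorithm 5.3 returns $PBP=QCQ^T$ of rank up to $r+p$. The rank-$r$ truncation defining $\widetilde B_r$ adds $\|PBP-\widetilde B_r\|=\lambda_{r+1}(PBP)\le s_{r+1}^2$, which is why the paper takes $\beta_{r,p}=2\alpha_{r,p}+s_{r+1}^2$. Your ``at most doubles the error'' covers only $PBP$. Your simplified constant happens to have enough slack to absorb the extra $s_{r+1}^2$, but this step should be made explicit.
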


\begin{proof}
% Note that, $\lVert\Gamma_K -\widetilde{\Gamma}_{K,r}\rVert_2\leq \lVert \Gamma_K-\Gamma_{K,r}\rVert_2+\lVert \Gamma_{K,r}-\widetilde{\Gamma}_{K,r}\rVert_2.$ We will now approximate each of these separately. The two terms signify the error due to (deterministic) best rank $r-$ approximation and the error due to using a randomized SVD approximation in lieu of the best rank $r$ approximation, respectively. First of all, note that for the deterministic  error,
% \begin{align}\label{det_err}
% \|\Gamma_K-\Gamma_{K,r}\|_2=\sigma^2
% \max_{i>r}
% \frac{
% \left(
% 1-(1-\omega s_i^2)^K
% \right)^2
% }{
% s_i^2
% }
% \leq
% \sigma^2\omega^2K^2s_{r+1}^2.\end{align}
% Effectively, this estimate tells us that to maintain the error below some chosen tolerance level $\varepsilon$, as the number of iterations grow, we should allow the retained rank to grow as well so that $K^2s^2_{r+1}\lesssim \varepsilon$. 

We estimate the error in approximating the covariance matrix by the randomized algorithm given in \cite[Algorithm 5.3]{halko2011}. To that end, consider first
\begin{align}
\Gamma_K&=\sigma^2\omega^2B(S(G))^2=\sigma^2 \omega(I-G)(S(G))^2\nonumber\\
    &=\sigma^2\omega \left(\sum_{k=0}^{K-1}G^k -\sum_{k=0}^{K-1}G^{k+1}\right)\sum_{k=0}^{K-1}G^k \nonumber\\
    &=\sigma^2\omega(I-G^K)\sum_{k=0}^{K-1}G^k =\sigma^2\omega \left(\sum_{k=0}^{K-1}G^k -\sum_{k=K}^{2K-1}G^k \right)
\end{align}
Similarly, it is easy to verify that $
   \widetilde\Gamma_{K,r}=\sigma^2\omega \left(\sum_{k=0}^{K-1}\widetilde G_r^k -\sum_{k=K}^{2K-1}\widetilde G_r^k \right).
  $
Thus, we get,
\begin{align*}
\Gamma_{K}-\widetilde\Gamma_{K,r}
=\sigma^2\omega
\Bigg[\sum_{k=0}^{K-1}
\left(
G^k-\widetilde G_r^k
\right)-
\sum_{k=K}^{2K-1}
\left(
G^k-\widetilde G_r^k
\right)
\Bigg].
\end{align*}

%  Consider,
% \begin{align}
% \widetilde\Sigma_{K,r}-\Sigma_{K,r} &=
% \sigma^2\omega^2\left[
% (S(\widetilde G_r)-S(G_r))
% \widetilde B_r
% S(\widetilde G_r)
% +S(G_r)(\widetilde B_r-B_r)
% S(\widetilde G_r)
% +S(G_r)B_r
% (S(\widetilde G_r)-S(G_r))\right].
% \end{align} We will now try to estimate each term separately. First of all, on taking norms,
% \begin{align}\label{eq:10.3}
% \|\widetilde\Sigma_{K,r}-\Sigma_{K,r}\|
% \le&
% \sigma^2\omega^2\left[
% \|S(\widetilde G_r)-S(G_r)\|
% \|\widetilde B_r\|
% \|S(\widetilde G_r)\|
% +\|S(G_r)\|
% \|\widetilde B_r-B_r\|
% \|S(\widetilde G_r)\|
% +\|S(G_r)\|
% \|B_r\|
% \|S(\widetilde G_r)-S(G_r)\|\right].
% \end{align}
% We first bound $\Vert S(G_r)\rVert$ and $\lVert S(\widetilde G_r)\rVert$. From our assumption $\|G_r\|\leq 1$ and $\|\widetilde G_r\|\leq 1$. Consider first, \begin{align}\label{eq:10.4}
% \|S(G_r)\|
% =\left\|
% \sum_{m=0}^{K-1}G_r^m
% \right\|
% \leq
% \sum_{m=0}^{K-1}\|G_r\|^m
% \leq
% K:=H_K.\end{align} Exactly the same calculation also gives us, $\|S(\widetilde{G}_r)\|\leq H_K$. 
% Next, we bound the norms of $\lVert S(G_r)-S(\widetilde G_r)\rVert$. 
\noindent Using the identity, 
$\widetilde G_r^m-G^m
=\sum_{j=0}^{m-1}
\widetilde G_r^{\,j}
(\widetilde G_r-G)
G^{\,m-1-j},
$
 and the fact that \(
\|G\|_2=\rho<1,\|\widetilde G_r\|_2\le1
\),  we get
\begin{align*}\|\widetilde G_r^m-G^m\|_2
\le \sum_{j=0}^{m-1}\rho^{m-1-j}\,
\|\widetilde G_r-G\|_2=\frac{1-\rho^m}{1-\rho}\omega\lVert \widetilde B_r- B\rVert_2.
\end{align*}
It follows that
\begin{align}\label{rand_err}
\|\Gamma_{K}-\widetilde\Gamma_{K,r}\|_2
&\le \sigma^2\omega
\sum_{k=1}^{2K-1}
\|G^k-\widetilde G_r^k\|_2
\le
\sigma^2\omega^2\left(
\sum_{k=1}^{2K-1}\frac{1-\rho^k}{1-\rho}
\right)
\|B-\widetilde B_r\|_2
\nonumber\\&= \sigma^2\omega^2 H_{K,\rho}
\|B-\widetilde B_r\|_2 \text{ where } H_{K,\rho}=\frac{2K-1}{1-\rho}-\frac{\rho(1-\rho^{2K-1})}{(1-\rho)^2}.
\end{align}

% Therefore,
% \begin{align}\label{eq:10.5}
% \|S(\widetilde G_r)-S(G_r)\|
% &=
% \left\|
% \sum_{m=0}^{K-1}
% (\widetilde G_r^m-G_r^m)
% \right\|
% \leq
% \sum_{m=1}^{K-1}
% m
% \|\widetilde G_r-G_r\|
% \nonumber\\
% &=
% C_K
% \|\widetilde G_r-G_r\|=C_K\omega \lVert B_r-\widetilde B_r\rVert
% \end{align}
% where
% $C_K=
% \sum_{m=1}^{K-1}
% m=K(K-1)/2$
%  and $K$ is the iteration index.

%\seanc{Not sure if it's useful, but you can simplify $C_K$ using the geometric identity since it is the derivative with respect to $\rho$ of the normal geometric sum. I think that in particular you can show 
%\[
%C_K \leq \frac{1}{(1-\rho)^2}
%\]
%for all $K$.}
\noindent It thus remains for us to estimate $\lVert B-\widetilde{B}_r\rVert_2$. In order to do this, let us unpack how \cite[Algorithm 5.3]{halko2011} is used to find the randomized SVD approximation to $B:=A^TA$. First of all let $Q$ denote the randomized range finder from \cite[Algorithm 4.1]{halko2011} (see also remark \ref{rem2} below) such that, \begin{align}\label{eq:App1}
    \lVert B- QQ^*B\rVert_2 =\lVert (I-P) B\rVert_2\leq \epsilon
\end{align} for some chosen tolerance level $\epsilon$. In practice, this corresponds to choosing a target rank $r$ and oversampling parameter $p\geq 4$ in the approximation so that \eqref{eq:App1} holds with a high probability. In particular, we have the following result from \cite[Corollary 10.9]{halko2011},
\begin{align}\label{eq:App1.2}
  \mathbb{P}_{\mathrm{rSVD}}\left(  \|(I-P)B\|_2\leq\left(
1+16\sqrt{1+\frac{r}{p+1}}
\right)s_{r+1}^2+\frac{8\sqrt{r+p}}{p+1}
\left(\sum_{j>r}s_j^4\right)^{1/2}\right)\geq 1-3e^{-p}
\end{align}
For notational convenience, let us denote $\alpha_{r,p}:=\left(1+16\sqrt{1+\frac{r}{p+1}}
\right)s_{r+1}^2+\frac{8\sqrt{r+p}}{p+1}
\left(\sum_{j>r}s_j^4\right)^{1/2}$. Now as per \cite[Algorithm 5.3]{halko2011},  for the symmetric positive semi-definite (PSD) matrix, $B$, we first form another PSD matrix, $C=Q^*BQ$, evaluate the eigen-decomposition of $C= V\Lambda V^*$ and then subsequently, $B\approx U\Lambda U^*$ where $U=QV$. Effectively then, we have the matrix $B$ approximated as follows:
\[B\approx QV \Lambda V^*Q^*=QCQ^*=QQ^*BQQ^*=PBP\]
\noindent Now $\lVert B-PBP\rVert_2\leq \lVert (I-P) B\rVert_2+\lVert PB(I-P)\rVert_2\leq 2\lVert (I-P) B\rVert_2 $. Note also that the matrix $Q\in \mathbb{R}^{n\times( r+p)}.$ Thus to get the final randomized rank $r-$ approximation, we truncate
\begin{align}
  \widetilde{B}_r=QV_r\Lambda_rV_r^*Q^*  
\end{align}
Clearly, $\lVert PBP-\widetilde{B}_r\rVert_2=\lambda_{r+1}(PBP)\leq \lambda_{r+1}(B)=s^2_{r+1}$ where $\lambda_{r+1}(M)$ denotes the $r+1-$th ordered eigenvalue of the matrix $M$. Thus we get
\begin{align}\label{eq:App2}
    \lVert B-\widetilde{B}_r\rVert_2&\leq \lVert B-PBP\rVert_2+\lVert PBP-\widetilde{B}_r\rVert_2 \nonumber\\
    &\leq 2\lVert (I-P)B\rVert_2+s_{r+1}^2.
\end{align}
Thus if we define, $\beta_{r,p}:=2\alpha_{r,p}+s_{r+1}^2$, and $E_{r,p}=\{\lVert B-\widetilde{B}\rVert_2\leq \beta_{r,p}\}$ then from \eqref{eq:App1} and \eqref{eq:App2} we get
    $\mathbb{P}(E_{r,p})\geq 1-3e^{-p}.$ Finally, on the event $E_{r,p}$, it follows from \eqref{rand_err} that,
    \begin{align*}
        \lVert \Gamma_K-\widetilde{\Gamma}_{K,r}\rVert_2\leq \sigma^2 \omega^2 H_{K,\rho}\beta_{r,p}:=\Delta_{K,r,p}
    \end{align*}
Thus, $\mathbb{P}_{\mathrm{rSVD}}(\lVert \Gamma_K -\widetilde{\Gamma}_{K,r}\rVert_2\leq \Delta_{K,r,p})\geq 1-3 e^{-p}.$ 

% \seanc{This is really the end of the proof, and the rest of the text is remarks (some of which is already in Remark \ref{rem2}). I'd move the rest of the text to after the proof and combine with the remark already there.} 
% \ac{removed it and added more recent references in the remark.}

% Note that for sufficiently large $r$, $\beta_{r,p}$ and consequently $\Delta_{K,r,p}$ is small. In practice, the residual norm $\|B-\widetilde B_r\|_2$ may instead be estimated algorithmically using other randomized techniques and used to obtain a computable approximation to $\Delta{K,r,p}$.

\begin{remark}\label{rem2}
 For a chosen tolerance level $\epsilon$, the rank $r$ may in fact adaptively be chosen to ensure $\beta_{r,p}\leq \epsilon$ with high probability by employing \cite [Algorithm 4.2]{halko2011} in lieu of \cite[Algorithm 4.1]{halko2011} at some extra one-time computational cost. Additionally, more recent developments in randomized numerical linear algebra allow one to employ `single-pass' algorithms, such as ``Nystr\"om method" to more efficiently compute the randomised PSD approximation to PSD matrices directly, see e.g. \cite[section 8]{tropp2023}. We also note that the singular values of the Radon transform decay like $j^{-1/2}$, and so the bound in terms of $\sum_{j>r} s_j^4$ is expected to decay as $r \rightarrow \infty$ independent of discretization.
\end{remark}

\end{proof}

\bibliographystyle{abbrv}
\bibliography{refs}

\end{document}